\SilentMatrices \SelectTips{cm}{}
\newcommand{\ot}{\otimes}
\newcommand{\C}{\mathbb{C}}
\newcommand{\inv}{^{-1}}
\newcommand{\Mat}{\text{\rm Mat}}
\newcommand{\GL}{\text{\rm GL}}
\renewcommand{\O}{\mathcal{O}}
\newcommand{\U}{\mathcal{U}}
\newcommand{\Spec}{\text{\rm Spec}}
\newcommand{\Specm}{\text{\rm Specm}}
\newcommand{\id}{\text{\rm Id}}
\newcommand{\DqN}{\mathcal{D}_q(\C^N)}
\newcommand{\DqNm}{\mathcal{D}_q(\C^{N-1})}
\newcommand{\Dq}{\mathcal{D}_q}
    \newcommand{\sD}{\mathcal{D}}
    \newcommand{\D}{\mathcal{D}}
\newcommand{\paren}[1]{\ensuremath{\left(#1\right)}}
\newcommand{\detq}{\text{\rm det}_q}
\newcommand{\OqGL}{\O_q(\GL_2)}
\newcommand{\OqGLell}{\O_q^{(\ell)}(\GL_2)}
\newcommand{\g}{\mathfrak g}
\newcommand{\gl}{\mathfrak{gl}}
\theoremstyle{plain}
\newtheorem{theorem}{Theorem}[section]
\newtheorem{prop}[theorem]{Proposition}
\newtheorem{lemma}[theorem]{Lemma}
\newtheorem{cor}[theorem]{Corollary}
\theoremstyle{definition}
\newtheorem{definition}[theorem]{Definition}
\newtheorem{example}[theorem]{Example}
\newtheorem{rmk}[theorem]{Remark}
\numberwithin{equation}{section}
\begin{document} 

\title{Quantum Weyl algebras and reflection equation algebras at a root of unity}
\author{ Nicholas Cooney \qquad Iordan Ganev \qquad David Jordan}
\date{}
\parindent = 10pt
\parskip = 0pt

\begin{abstract} 
We compute the center and Azumaya locus in the simplest non-abelian examples of quantized multiplicative quiver varieties at a root of unity:  quantum Weyl algebras of rank $N$, and quantum differential operators on the quantum group $\GL_2$.  These examples illustrate in elementary terms much more general phenomena explored further in \cite{GanevquantumFrobeniuscharacter2019}.  
\end{abstract}

\maketitle

\section{Introduction}\label{sec:intro}

Many algebras appearing in representation theory exhibit interesting $q$-deformations -- flat families of algebras depending on a nonzero complex parameter $q$, whose value at $q=1$ is the original algebra. Perhaps the most prominent example is the quantum group $U_q(\g)$ attached to a Lie algebra $\g$, which may be regarded as deforming the universal enveloping algebra $U(\g)$, or alternatively the big Bruhat cell $BB^-$ in $G$. Other examples -- which typically  pivot crucially  on the quantum group --  include Hecke algebras, quantum coordinate algebras, quantum Weyl algebras, and quantized multiplicative quiver and hypertoric varieties. 

These $q$-deformations tend  to exhibit a common structure for generic $q$, and develop radically different structures when $q$ is a root of unity.  In particular, such specializations are a reliable source of interesting \emph{Azumaya} algebras: an algebra over $\C$ is called Azumaya if it is finitely generated and projective as a module over its center, and its fiber at any point in the spectrum of the center is a matrix algebra. Such properties have been observed for many different algebras, including Fadeev--Reshetikhin--Turaev algebras, quantum Weyl algebras, double affine Hecke algebras, and quantized multiplicative quiver and hypertoric varieties  \cite{BackelinLocalizationquantumgroups2008, BezrukavnikovCherednikalgebrasHilbert2006, CooneyQuantummultiplicativehypertoric2016a, GanevQuantizationsmultiplicativehypertoric2018, VaragnoloDoubleaffineHecke2010, LevittQuantizedWeylalgebras2018, DeconciniQuantumFunctionAlgebra1994, BrownramificationscentresQuantised2002}.   

We consider the simplest non-abelian examples of quantum multiplicative quiver varieties: the algebra $\DqN$ of $q$-difference operators on $\C^N$, a quantum Weyl algebra in the sense of \cite{GiaquintoQuantumWeylAlgebras1995}, and the algebra $\Dq(\GL_2)$ of $q$-difference operators on $\GL_2$.  This paper is intended as a supplement to \cite{GanevquantumFrobeniuscharacter2019}, where a much wider class of $q$-deformations are considered.  The methods in \cite{GanevquantumFrobeniuscharacter2019} emerged as an abstraction of the techniques applied to the basic examples analyzed in the current paper. 

In the case of the quantum Weyl algebra, for $q$ a root of unity, we identify the center of $\DqN$ with the coordinate algebra of $T^*\C^N$, equipped with the non-standard Poisson bivector,
$$ \pi = \sum_{j>i} \left( y_j y_i  \frac{\partial}{\partial y_j}  \wedge  \frac{\partial}{\partial y_i} - z_j z_i  \frac{\partial}{\partial z_j}  \wedge  \frac{\partial}{\partial z_i} \right) + \sum_{i \neq j} y_i z_j   \frac{\partial}{\partial y_i}  \wedge  \frac{\partial}{\partial z_j}  + 2 \sum_{i=1}^N \left( 1 + \sum_{k=1}^i y_k z_k  \right)  \frac{\partial}{\partial y_i} \wedge  \frac{\partial}{\partial z_i} ,$$
where $\{y_i\}$ and $\{z_i\}$ are the base and fiber coordinates on  $T^*\C^N$. We consider the non-degeneracy locus $(T^*\C^N)^\circ$ of $\pi$,  define a corresponding non-commutative localization $\DqN^\circ$  of $\DqN$ at certain grading operators (analogs of `Euler operators'), and show the following:

\begin{theorem}\label{thm:introDqN}(Theorem \ref{thm:DqNAzumaya}) Suppose $q$ is a primitive $\ell$-th root of unity, where $\ell > 1$ is odd. Then the algebra $\DqN^\circ$ is Azumaya over $(T^*\C^N )^\circ$, and its rank is $\ell^N \times \ell^N$. \end{theorem}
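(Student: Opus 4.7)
The plan is to proceed in three main steps: identify a large central polynomial subalgebra $Z_0 \subset \DqN$ via $\ell$-th powers of generators; show that Ore-localizing at the Euler operators produces $\DqN^\circ$ whose center matches $\O((T^*\C^N)^\circ)$; and establish the Azumaya property by exhibiting simple modules of dimension $\ell^N$ at every closed point of the center.

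First I would establish a PBW-type basis of $\DqN$ via ordered monomials in the generators $y_i, z_j$. Using the Giaquinto-Zhang $q$-commutation relations, together with standard identities for $q$-binomial coefficients at $\ell$-th roots of unity (with $\ell$ odd), a direct calculation shows that each $y_i^\ell$ and $z_j^\ell$ commutes with every generator. This produces a central polynomial subalgebra $Z_0 = \C[y_i^\ell, z_j^\ell : 1 \leq i, j \leq N]$ over which $\DqN$ is free of rank $\ell^{2N}$ by the PBW basis.

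Next I would identify Euler-type elements $E_i \in \DqN$ of the shape $1 + \sum_{k \leq i} y_k z_k$ (with whatever $q$-dependent normalization the relations dictate). These are normal in the sense that they $q$-commute with each generator, so $E_i^\ell$ is central and the Ore localization $\DqN^\circ$ is well-defined. A first-order expansion of the commutator $[a,b]$ in $(q-\epsilon)$, applied to $Z_0$-lifts of central functions, exhibits the explicit bivector $\pi$ displayed in the introduction as the induced Poisson structure on $\Spec Z_0 \cong T^*\C^N$. The determinant of the matrix of $\pi$ factors as a product involving the images of the $E_i^\ell$, so that inverting these elements restricts exactly to the non-degeneracy locus $(T^*\C^N)^\circ$, yielding $Z(\DqN^\circ) = \O((T^*\C^N)^\circ)$. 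Since $\DqN^\circ$ is free of rank $\ell^{2N}$ over this center, it is a PI algebra of degree at most $\ell^N$. By the Artin-Procesi theorem, it then suffices to exhibit, at each closed point $\chi$ of $\Specm \O((T^*\C^N)^\circ)$, a simple $\DqN^\circ$-module of dimension exactly $\ell^N$ with central character $\chi$. Such modules can be built iteratively by adapting the construction of cyclic modules for $\Uqsl$ at a root of unity: start with a simultaneous eigenvector for $y_1, \ldots, y_N$ with eigenvalues prescribed by $\chi$ and apply the $z_i$ operators. The non-vanishing of each $E_i$ at $\chi$ is precisely the condition preventing an orbit under $z_i$ from truncating before reaching dimension $\ell$, producing an irreducible module of the required dimension $\ell^N$.

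The main obstacle is the Poisson-matching inside the second step: verifying that the quasi-classical limit of the quantum commutator recovers the exact bivector $\pi$ displayed in the introduction, and simultaneously that its degeneracy locus is cut out by the image of $\prod_i E_i^\ell$. The Giaquinto-Zhang relations are inhomogeneous, and the cross terms $y_i z_j \frac{\partial}{\partial y_i} \wedge \frac{\partial}{\partial z_j}$ together with the triangular correction $2(1 + \sum_{k \leq i} y_k z_k)\frac{\partial}{\partial y_i} \wedge \frac{\partial}{\partial z_i}$ require careful bookkeeping of subleading terms. An alternative route that side-steps the explicit Poisson calculation is to argue Azumaya intrinsically by constructing the irreducible modules first over a Zariski-dense subset where all $E_i \neq 0$, and then recovering $\pi$ a posteriori as the Kirillov-Kostant bracket determined by the Azumaya structure on the resulting open subvariety.
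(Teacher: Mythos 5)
Your overall plan (central $\ell$-th powers, Ore localization at the Euler operators, then matrix-algebra fibres of rank $\ell^N$ at each point of the localized centre) is sound in outline, but its load-bearing step is broken as stated. In $\DqN$ the coordinate generators (your $y_i$) pairwise $q$-commute: $x_jx_i=qx_ix_j$ for $j>i$. Hence if $v$ were a simultaneous eigenvector with eigenvalues $\lambda_1,\dots,\lambda_N$, then $(1-q)\lambda_i\lambda_j=0$ for every pair, so at most one eigenvalue can be nonzero. At a generic point $\chi$ of the claimed Azumaya locus all the values $\chi(x_i^\ell)$ are nonzero, so the cyclic vector you propose to start from does not exist and the ``apply the $z_i$'' induction never begins. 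The construction must be replaced by something genuinely different, e.g.\ an $\ell^N$-dimensional space indexed by $(\Z/\ell\Z)^N$ with shift-and-scale operators, or the paper's structural route: adjoin square roots $\alpha_i$ of the $\beta_i$, change variables $w_i=x_i\alpha_i\inv$, $z_i=-q\partial_i\alpha_i\inv$ to decouple the indices, and obtain $\DqN^\circ\cong\left(\D_q(\C^1)^\circ\right)^{\ot N}$ (Proposition \ref{prop:tensor}), which reduces the whole theorem to the rank-one case. (Also, Artin--Procesi by itself requires \emph{all} irreducibles to have maximal dimension; your pointwise statement only suffices when combined with the fibre dimension count $\dim_\C A\ot_{Z}Z/\mathfrak m=\ell^{2N}$, so that the surjection onto $\Mat_{\ell^N\times\ell^N}(\C)$ coming from one simple module of dimension $\ell^N$ is forced to be an isomorphism.)

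The second gap is the identification of the localized centre with $\O((T^*\C^N)^\circ)$. The semiclassical limit of the commutator cannot deliver this: computing the Poisson bivector $\pi$ on $\Spec Z_0$ says nothing about whether the centre of $\DqN^\circ$ is exactly $Z_0^\circ$, and in any case what your argument actually needs is the purely quantum identity expressing $\beta_i^\ell$ inside the $\ell$-centre, namely $\beta_i^\ell=1+\sum_{j}x_j^\ell\partial_j^\ell$ with no lower-order $q$-binomial corrections (Proposition \ref{prop:betaeqns}, whose proof in turn relies on the computation of the centre in Proposition \ref{prop:center}). Without this identity you cannot match the locus where the $\beta_i$ are inverted with the locus $1+\sum_j\nu_j\check\nu_j\neq 0$ that defines $(T^*\C^N)^\circ$, nor justify the assertion that nonvanishing of $E_i$ at $\chi$ is ``precisely'' the condition preventing truncation of your would-be modules. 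You flag the Poisson matching as the main obstacle, but it is largely a red herring: the paper never uses $\pi$ in its proof, and the real work sits in Propositions \ref{prop:center} and \ref{prop:betaeqns} together with the tensor decomposition of Proposition \ref{prop:tensor} and the known rank-one Azumaya statement.
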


As our second example, we specialize to the case $N=2$ and consider the  reflection equation algebra $\O_q(\GL_2)$, as well as its `Heisenberg double' $\Dq(\GL_2)$ of $q$-difference operators on the group $\GL_2$.  The reflection equation algebra is a $q$-deformation of the coordinate algebra of the general linear group, and it quantizes the Semenov-Tian-Shansky Poisson bracket \cite{MudrovquantizationSemenovTianShanskyPoisson2007}.  The algebra $\D_q(\GL_2)$ is then a $q$-deformation of $\O(\GL_2\times \GL_2)$.  We note that the Semenov-Tian-Shansky bracket on $\GL_2 \times \GL_2$ is generically symplectic  \cite{Semenov-Tian-ShanskyPoissonLieGroups1994}. The following claim is a special case of \cite[Proposition 2.2.3]{VaragnoloDoubleaffineHecke2010}.

\begin{prop}\label{prop:GL2ellintro} (Corollary \ref{cor:GL2ell})  Suppose $q$ is a primitive $\ell$-th root of unity, where $\ell >1$ is odd.  Then:
\begin{enumerate}
\item We have a central embedding of the commutative algebra $\O(\GL_2)$ into $\O_q(\GL_2)$.
\item We have a central embedding of $\O(\GL_2\times\GL_2)$ into $\Dq(\GL_2)$.
\end{enumerate}
\end{prop}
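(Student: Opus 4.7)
The plan is to produce the required central embeddings by the usual quantum Frobenius philosophy: identify explicit central elements as $\ell$-th powers of a generating set, check they satisfy the classical commutative relations, and conclude from a PBW count that the resulting subalgebra is isomorphic to the classical coordinate ring.

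For part (1), I would write $\OqGL$ in terms of the four $L$-operator entries, conventionally denoted $a, b, c, d$, together with the inverse of the quantum determinant $\detq$.  The reflection equation yields a finite list of quadratic relations, each of the form $xy = \alpha\, yx + \beta$ for some power $\alpha$ of $q$ and some quadratic correction $\beta$ in the remaining generators.  I would first verify by induction that for any two generators $x, y$ one has $x^\ell y = y\, x^\ell$; the induction step produces correction terms weighted by Gaussian binomials $\binom{\ell}{k}_q$ for $1 \le k \le \ell-1$, all of which vanish at a primitive odd $\ell$-th root of unity.  Once each $x^\ell$ is central, the subalgebra generated by $a^\ell$, $b^\ell$, $c^\ell$, $d^\ell$ and $\detq^{-\ell}$ is automatically commutative.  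One then verifies that these elements satisfy exactly the relations defining $\O(\GL_2)$, with $\detq^\ell$ playing the role of the classical determinant up to an explicit scalar, and appeals to the known PBW basis of $\OqGL$ to conclude that no further relations are forced.

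For part (2), I would use the presentation of $\Dq(\GL_2)$ as a Heisenberg double of $\OqGL$: the generators split into an ``algebra of functions'' copy and an ``algebra of vector fields'' copy of $\OqGL$, coupled by $R$-matrix cross relations.  The argument from part (1) applies verbatim to each copy, so each contributes a central copy of $\O(\GL_2)$.  What remains is the cross check: the iterated cross relations involve the $R$-matrix applied $\ell$ times, and the off-diagonal corrections are again proportional to $q$-binomials that vanish at a primitive odd $\ell$-th root of unity.  Consequently the two central copies of $\O(\GL_2)$ commute with each other, and they assemble into a central embedding of $\O(\GL_2\times\GL_2)$.

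The main obstacle is the centrality check in part (1): because the reflection-equation relations are inhomogeneous, iterating $xy = \alpha yx + \beta$ a total of $\ell$ times produces a complicated inductive expression, and the key combinatorial input is that the accumulated coefficients are precisely quantum binomials whose vanishing at a primitive odd $\ell$-th root of unity forces the correction terms to cancel.  A cleaner alternative would be to transfer the statement from the FRT algebra (where the Frobenius map is the classical De Concini--Kac--Procesi one) via the transmutation equivalence relating FRT algebras to reflection equation algebras; this would bypass the direct combinatorial bookkeeping but requires setting up the transmutation in our conventions.
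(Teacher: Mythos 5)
Your plan breaks down at its very first step: in the reflection equation algebra the $\ell$-th powers of \emph{all four} generators are not central, so the claim ``for any two generators $x,y$ one has $x^\ell y = yx^\ell$'' is false, and the subalgebra you propose to generate by $a^\ell, b^\ell, c^\ell, d^\ell$ is not central. Concretely, the relations of $\OqGL$ include $ba = ab + (1-q^{-2})bd$ together with $da=ad$ and $db=q^2bd$; iterating gives
\begin{equation*}
b\,a^{n} \;=\; \sum_{k=0}^{n}\binom{n}{k}(1-q^{-2})^{k}\,a^{\,n-k}\,b\,d^{\,k},
\end{equation*}
with \emph{ordinary} binomial coefficients rather than Gaussian ones, because the correction term $bd$ commutes with $a$ only up to further such corrections and no $q$-power ladder is produced. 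At $n=\ell$ the $k=1$ term $\ell(1-q^{-2})a^{\ell-1}bd$ does not vanish in characteristic zero, so $a^\ell$ fails to be central (the paper makes this point explicitly in a remark). The same failure occurs for $x_{11}^\ell$ and $\partial_{11}^\ell$ in $\Dq(\GL_2)$, so the ``apply part (1) verbatim to each copy'' step of your part (2) inherits the problem. By contrast, your vanishing-coefficient argument is fine for $b^\ell$, $c^\ell$, $d^\ell$: e.g.\ $b^n a = ab^n + (q^{2(n-1)}-q^{-2})b^n d$ has a single coefficient that genuinely vanishes at $n=\ell$.

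The missing idea is the replacement of $a^\ell$ by a corrected element. The paper shows that $b^\ell, c^\ell, d^\ell$ and $\detq$ are central, expands $\detq^{\,\ell}$, and uses centrality to force all monomials whose $q$-weights are not divisible by $\ell$ to cancel; this yields a \emph{unique} central element $z$ (with leading term $a^\ell$) satisfying $\detq^{\,\ell} = z\,d^\ell - b^\ell c^\ell$, e.g.\ $z = a^3 + 3abc + 3qbcd$ for $\ell=3$. The central copy of $\O(\GL_2)$ is then generated by $z, b^\ell, c^\ell, d^\ell$ (with $\detq^{\,\ell}$ inverted), and in $\Dq(\GL_2)$ one takes the images $v,w$ of $z$ under the two embeddings $L\mapsto X$, $L\mapsto D$ together with the $\ell$-th powers of the off-diagonal and lower-right entries, checking the cross relations from the explicit $R$-matrix commutation formulas. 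Your suggested detour through transmutation from the FRT algebra is not an innocent shortcut either: transmutation changes the multiplication, so centrality of $\ell$-th powers in the FRT algebra does not transport to the reflection equation algebra --- indeed that is exactly what the non-centrality of $a^\ell$ and the appearance of $z$ are telling you.
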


Each of the spaces $(T^*\C^2)^\circ$ and $\GL_2\times \GL_2$ carry a non-degenerate Hamiltonian $\GL_2$-Poisson structure.  In other words, there are `group-valued moment maps'
$$\tilde \mu : (T^*\C^2)^\circ \rightarrow \GL_2,\qquad \tilde\phi: \GL_2\times \GL_2\to \GL_2,$$
which are $\GL_2$-equivariant and describe the $\GL_2$-action by Hamiltonian flows \cite{AlekseevLiegroupvalued1998}.  The non-degeneracy condition implies in particular that the preimage $\tilde\mu^{-1}(BB^-)$ of the big Bruhat cell in each case is an open symplectic leaf.  The central embedding of $\O(T^*\C^2)$ into $\Dq(\C^2)$ is compatible with the moment maps in the following sense:

\begin{theorem}  Suppose $q$ is a primitive $\ell$-th root of unity, where $\ell >1$ is odd.  The central embedding of $\O(\GL_2)$ into $\O_q(\GL_2)$ fits into the following commutative diagram:
\[ \xymatrix{  \O_q(\GL_2 ) \ar[rr]^{\mu_q} & & \Dq(\C^2)^\circ \\
		\O(\GL_2 ) \ar@{^{(}->}[u] \ar[rr]^{\tilde \mu^\#  \ \ } & & \O( (T^* \C^2 )^\circ ) \ar@{^{(}->}[u] }  \]
Moreover, the Azumaya locus $(T^*\C^2)^\circ$ is the preimage under $\tilde \mu$ of the big Bruhat cell of $\GL_2$.  \end{theorem}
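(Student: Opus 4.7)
The plan is to treat the two assertions separately: first the commutativity of the diagram by reducing to a check on a set of generators, then the identification of the Azumaya locus $(T^*\C^2)^\circ$ with $\tilde\mu^{-1}(BB^-)$ by explicit computation of the group-valued moment map.

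For the commutativity, I would use that $\O_q(\GL_2)$ is generated by the matrix entries $a,b,c,d$ of its canonical $L$-matrix (together with the inverse quantum determinant), so it suffices to verify the identity on these four generators. By Proposition \ref{prop:GL2ellintro}, the central embedding $\O(\GL_2) \hookrightarrow \O_q(\GL_2)$ is of Frobenius type, sending each classical matrix entry to an appropriate $\ell$-th power expression in $a,b,c,d$; likewise, by Theorem \ref{thm:introDqN}, the central embedding $\O((T^*\C^2)^\circ) \hookrightarrow \Dq(\C^2)^\circ$ sends $y_i \mapsto y_i^\ell$ and $z_i \mapsto z_i^\ell$. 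The quantum moment map $\mu_q$ is given on the $L$-matrix generators by explicit formulas in the $y_i, z_i$ coming from the Heisenberg double structure, and $\tilde\mu^\#$ is the commutative specialization of the same formulas. Commutativity then reduces to computing $\ell$-th powers of the four entries of $\mu_q(L)$ using the $q$-commutation relations of $\Dq(\C^2)$: since $q^\ell = 1$ and $\ell$ is odd, $q$-commuting variables $u,v$ satisfy $(uv)^\ell = u^\ell v^\ell$, so these $\ell$-th powers collapse to classical polynomials in $y_i^\ell, z_i^\ell$ that coincide with $\tilde\mu^\#$ applied to $a^\ell, b^\ell, c^\ell, d^\ell$.

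For the identification of the Azumaya locus, Theorem \ref{thm:introDqN} already identifies this locus with $(T^*\C^2)^\circ$, the non-degeneracy locus of the Poisson bivector $\pi$. It thus suffices to compute $\tilde\mu$ explicitly as the commutative specialization of the $L$-matrix formula for $\mu_q$, obtaining a concrete $\GL_2$-valued map in the $y_i, z_i$. The big Bruhat cell $BB^-$ of $\GL_2$ is characterized by the non-vanishing of the principal minors appearing in the Gauss decomposition. A direct $2 \times 2$ calculation shows that these minors, evaluated at $\tilde\mu(y,z)$, are precisely the Euler-type scalars $1 + y_1 z_1$ and $1 + y_1 z_1 + y_2 z_2$ that also control the non-degeneracy of $\pi$; hence $\tilde\mu^{-1}(BB^-) = (T^*\C^2)^\circ$.

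The main obstacle is the bookkeeping in the first assertion: one must write out $\mu_q$ on the four $L$-matrix generators, take $\ell$-th powers carefully in the $q$-Weyl algebra $\Dq(\C^2)$, and verify the match with $\tilde\mu^\#$ on classical $\ell$-th powers. There are potential subtleties involving normal ordering and quantum corrections to $\ell$-th powers of mixed expressions in $y_i, z_i$, but these are systematic given $q^\ell = 1$ and $\ell$ odd. Once the explicit form of $\tilde\mu$ has been extracted, the identification of the Azumaya locus with $\tilde\mu^{-1}(BB^-)$ reduces to an essentially immediate Gauss decomposition calculation.
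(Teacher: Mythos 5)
Your treatment of the second assertion is fine and is essentially the paper's argument: the Azumaya locus is $\Spec(Z_2^\circ)=(T^*\C^2)^\circ$ by Theorem \ref{thm:DqNAzumaya}, and since $\tilde\mu(a,b,\xi,\zeta)$ has $(2,2)$-entry $1+\nu_1\check\nu_1$-type and determinant $1+\nu_1\check\nu_1+\nu_2\check\nu_2$-type functions, the preimage of $BB^-$ (non-vanishing of the relevant lower-right minor and of the determinant) is exactly the locus where $\beta_1^\ell$ and $\beta_2^\ell$ are invertible.

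The first assertion, however, has a genuine gap at the $(1,1)$-entry. You assume the central (Frobenius) embedding $\O(\GL_2)\hookrightarrow\OqGL$ sends the four classical matrix coordinates to $a^\ell,b^\ell,c^\ell,d^\ell$, and that commutativity then follows by collapsing $\ell$-th powers of the entries of $\mu_q(L)$ via $(uv)^\ell=u^\ell v^\ell$ for $q$-commuting $u,v$. But $a^\ell$ is \emph{not} central in $\OqGL$ (see the remark after Corollary \ref{cor:GL2ell}); the classical $(1,1)$-coordinate corresponds to the corrected central element $z$ of Proposition \ref{prop:OqCenter}, determined by $\detq^\ell=zd^\ell-b^\ell c^\ell$, whose leading term is $a^\ell$ but which carries nontrivial lower-order terms (e.g.\ $z=a^3+3abc+3qbcd$ for $\ell=3$). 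Consequently computing $\mu_q(a)^\ell=(1+\partial_2x_2)^\ell$ is computing the wrong element, and in any case $1+\partial_2x_2$ is not a product of $q$-commuting variables, so the collapse you invoke does not apply; the same caveat applies to $d$, where $\mu_q(d)^\ell=\beta_1^\ell=1+x_1^\ell\partial_1^\ell$ is not a formal consequence of $q$-commutativity but is the content of Proposition \ref{prop:betaeqns} (vanishing of the intermediate quantum coefficients at a root of unity). The paper circumvents the missing explicit formula for $z$ by an indirect argument: $\mu_q(\detq)=\beta_2$ (Proposition \ref{prop:dq2mm}), $\beta_1^\ell=1+x_1^\ell\partial_1^\ell$ and $\beta_2^\ell=1+x_1^\ell\partial_1^\ell+x_2^\ell\partial_2^\ell$ (Proposition \ref{prop:betaeqns}), and the defining relation $\detq^\ell=zd^\ell-b^\ell c^\ell$ then force $\mu_q(z)=1+x_2^\ell\partial_2^\ell$; only for $b=\partial_2x_1$ and $c=\partial_1x_2$ does your direct $q$-commuting argument suffice. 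To repair your proof you would need either this determinant-identity argument or a closed formula for $z$, which is not available.
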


In \cite{GanevquantumFrobeniuscharacter2019}, such a commutative diagram is called a Frobenius quantum moment map.  These exist also in the case of $\Dq(\GL_N)$, as was originally proven in \cite{VaragnoloDoubleaffineHecke2010}. In Proposition \ref{prop:dqgl2mmcenter}, we describe the Frobenius quantum moment map in this case in coordinates for $N=2$, where already the explicit formulas which arise are interesting and somewhat unexpected.\\

\paragraph{\bf Motivation} The algebras $\Dq(\C^N)$ and $\Dq(\GL_2)$ are basic examples of framed quantum multiplicative quiver varieties \cite{JordanQuantizedmultiplicativequiver2014}.  Their quantum Hamiltonian reductions therefore define quantizations of multiplicative quiver varieties, and in \cite{GanevquantumFrobeniuscharacter2019} a general framework is described for establishing the Azumaya locus for both the unframed and framed quantizations.  Our aim in this short paper is to give an elementary proof in these basic examples, as motivation for the machinery needed for the general case. \\

\paragraph{\bf Acknowledgments. } The authors would like to thank  Kobi Kremnitzer for his encouragement and advice. I.G. is grateful for the support of the Advanced Grant ``Arithmetic and Physics of Higgs moduli spaces'' No.\ 320593 of the European Research Council. The work of D.J. was supported by the European Research Council under the European Union's Horizon 2020 research and innovation programme [grant agreement no.\ 637618].


\section{An algebra of $q$-difference operators}\label{sec:dqn}

\subsection{Basic definitions}

\begin{definition} Let $q \in \C^\times$ and let $N$ be a non-negative integer. The algebra $\DqN$ of $q$-difference operators on $\C^N$ is the quotient of the free algebra on the generators $x_1, \dots, x_N, \partial_1, \dots, \partial_N$ by the relations: 
	$$x_j x_i = q x_i x_j \qquad \text{for $j >i$}$$ 
	$$\partial_j \partial_i = q\inv \partial_i \partial_j \qquad \text{for $j >i$}$$ 
	$$\partial_j x_i = q x_i \partial_j \qquad \text{for $j \neq i$}$$ 
	$$\partial_i x_i = q^2 x_i \partial_i + (q^2-1)(1 + \sum_{j=1}^{i-1} x_j \partial_j)$$ 
\end{definition}

\begin{definition}  The quantum coordinate algebra $\O_q(\C^N)$ of affine $N$-space is defined as the quotient of the free algebra on generators $x_1, \dots, x_N$ by the relation $x_j x_i = q x_i x_j$ for $j >i$. As a vector space, it is spanned by the monomials $(x_1)^{n_1} \cdots (x_N)^{n_N}$, where  $n_i \geq 0$ for all $i = 1, \dots, N$.  There is an action of $\DqN$ on $\O_q(\C^N)$ given by
$$x_i \triangleright f = x_i f $$
$$ \partial_i \triangleright [(x_1)^{n_1} \cdots (x_N)^{n_N}] = (q x_1)^{n_1} \cdots (qx_{i-1})^{n_{i-1}} \frac{(q x_i)^{n_i} - x_i^{n_i}}{x_i} x_{i+1}^{n_{i+1}} \cdots (x_N)^{n_N},$$
for $f \in \O_q(\C^N)$. See \cite[12.3.3]{KlimykQuantumGroupsTheir1997} for more details and examples. \end{definition}

Inspection of the defining relations of $\DqN$ gives the following lemma (where we define $\D_q(\C^0) := \C$).

\begin{lemma}\label{lemma:DqNbasics} Let $q \in\C^\times$.
	\begin{enumerate} 
		\item The algebra $\DqN$ is spanned as a vector space by the elements $x_1^{m_1} \cdots x_N^{m_N} \partial_1^{n_1} \cdots \partial_N^{n_N}$, where the $m_i, n_i$ are nonnegative integers.  
		\item For $N \geq 1$, the subalgebra of $\DqN$ generated by the elements $x_1, \dots, x_{N-1}, \partial_1, \dots, \partial_{N-1}$ is isomorphic to $\DqNm$. 
	\end{enumerate}
\end{lemma}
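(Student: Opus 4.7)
The plan for part (1) is a PBW-type rewriting argument. I would treat the four defining relations as rewriting rules that push the $x$'s to the left of the $\partial$'s and order both groups by ascending index. The first three relations $x_j x_i = q x_i x_j$, $\partial_j \partial_i = q\inv \partial_i \partial_j$, and $\partial_j x_i = q x_i \partial_j$ (for $j \neq i$) permit the required swaps at no cost in total degree. The fourth relation $\partial_i x_i = q^2 x_i \partial_i + (q^2-1)(1 + \sum_{j<i} x_j \partial_j)$ replaces $\partial_i x_i$ with the normal-form term $q^2 x_i \partial_i$ plus strictly lower-degree terms. Using induction on total degree, with a secondary induction on the number of ``inversions'' from normal form (counting pairs of adjacent generators that are in the wrong relative position), any word in the free algebra reduces to a $\C$-linear combination of the stated PBW monomials.

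For part (2), I would first observe that the defining relations of $\DqNm$ appear verbatim among the defining relations of $\DqN$ when restricted to the generators $x_1, \dots, x_{N-1}, \partial_1, \dots, \partial_{N-1}$. In particular, the sum $\sum_{j<i} x_j \partial_j$ in the fourth relation remains within this subfamily since $j < i \leq N-1$. Therefore there is a well-defined algebra homomorphism $\phi : \DqNm \to \DqN$ sending generators to generators, and its image is exactly the stated subalgebra, giving surjectivity onto it. What remains is the injectivity of $\phi$.

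For injectivity, I would exhibit a faithful $\DqNm$-module in which the action factors through $\phi$. A natural choice is $\O_q(\C^{N-1})$, embedded into $\O_q(\C^N)$ as the span of monomials not involving $x_N$. One checks from the formulas that $\phi(\DqNm)$ preserves this subspace and that the restricted action coincides with the natural $\DqNm$-action on $\O_q(\C^{N-1})$. Consequently, $\phi$ is injective provided the $\DqNm$-action on $\O_q(\C^{N-1})$ is faithful.

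I expect the main obstacle to be establishing this faithfulness. The approach would be: take an element of $\DqNm$ written as a linear combination $\sum c_{m,n} x_1^{m_1}\cdots x_{N-1}^{m_{N-1}} \partial_1^{n_1}\cdots \partial_{N-1}^{n_{N-1}}$ of the spanning PBW monomials from part (1), suppose it acts as zero, and apply it to the basis monomials $x_1^{k_1}\cdots x_{N-1}^{k_{N-1}} \in \O_q(\C^{N-1})$. Since each $\partial_i$ acts as a $q$-difference operator in $x_i$ (up to multiplicative corrections from the other variables) and lowers the $x_i$-degree by one, a double induction, first on $|n| = \sum n_i$ and then lexicographically on $m$, lets one extract the vanishing of each $c_{m,n}$. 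This simultaneously proves faithfulness, hence injectivity of $\phi$, and yields linear independence of the PBW monomials as a byproduct.
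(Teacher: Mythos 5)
Your part (1) is fine: treating the relations as rewriting rules and inducting on total degree together with an inversion count does give the spanning statement. One small inaccuracy: the correction terms $x_j\partial_j$ ($j<i$) produced by the fourth relation are \emph{not} of lower total degree than $\partial_i x_i$ --- they have the same degree; they are handled because they carry one fewer $(\partial,x)$-inversion, so your lexicographic induction on (degree, inversions) still closes. The paper offers no written argument at all here (``inspection of the defining relations''), so for this part you have simply filled in the standard details.

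Part (2), however, has a genuine gap. Your injectivity argument for $\phi:\DqNm\to\DqN$ rests on the claim that the $\DqNm$-action on $\O_q(\C^{N-1})$ is faithful, and this is false precisely when $q$ is a root of unity --- the only case the paper ever uses, and a case covered by the lemma's hypothesis $q\in\C^\times$. Indeed, $\partial_i$ acts as a $q$-difference operator which lowers the $x_i$-degree by one with a scalar factor of the form $q^{2n_i}-1$ (up to powers of $q$ coming from the other variables); applying $\partial_i$ $\ell$ times to any monomial produces a product of such factors over $\ell$ consecutive values of the exponent, one of which is divisible by $\ell$, so $\partial_i^{\ell}$ annihilates all of $\O_q(\C^{N-1})$. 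Yet $\partial_i^{\ell}\neq 0$ in $\DqNm$ (it is one of the free polynomial generators of the center, cf.\ Proposition \ref{prop:center}), so the representation has a large kernel: your coefficient-extraction induction can never reach the coefficients $c_{m,n}$ with some $n_i\geq\ell$, faithfulness fails, and the advertised ``byproduct'' linear independence of the PBW monomials cannot be obtained this way either. (For $q$ not a root of unity your argument is correct.) An elementary repair valid for all $q\in\C^\times$: exhibit $\DqN$ as an iterated Ore (skew polynomial) extension of $\DqNm$, adjoining first $x_N$ via the automorphism determined by the $q$-commutation relations, then $\partial_N$ via an automorphism together with the skew derivation encoded in the fourth relation. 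This shows $\DqN$ is a free left $\DqNm$-module with basis $\{x_N^m\partial_N^n\}$, whence $\phi$ is injective with image the stated subalgebra; alternatively, run Bergman's diamond lemma on your rewriting system (resolving the overlap ambiguities) to get the full PBW basis of $\DqN$, from which both parts of the lemma follow.
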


Henceforth, we identify $\DqNm$ as a subalgebra of $\DqN$. Setting $q=1$ in the definition of $\DqN$, we obtain the algebra $\O(T^*  \C^N)$ of functions on the cotangent bundle $T^*\C^N $, and Lemma \ref{lemma:DqNbasics} shows that $\DqN$ is a flat deformation of $\O(T^* \C^N )$. On the other hand, replacing each $\partial_i$ by $\frac{\partial_i}{q^2 -1}$ and then setting $q=1$, we obtain a variant of the $N$th Weyl algebra of differential operators on affine $N$ space. In this variant, all generators commute, with the exception of $x_i$ and $\partial_i$, which satisfy the relation $$\partial_i x_i = 1 + x_1 \partial_1 + \dots + x_i \partial_i.$$ Hence, $\DqN$ is $q$-deformation of both the algebra $\O(T^*\C^N)$ and a variant of the Weyl algebra.

\begin{rmk} The algebra $\DqN$ is associated to the $A_2$ quiver $\bullet \rightarrow \bullet$ equipped with the dimension vector $\paren{1,N}$. It  is isomorphic to the algebra $\D_q(\Mat_{(1,N)}(Q))$ defined in \cite[Example 4.8]{JordanQuantizedmultiplicativequiver2014} via the bijection $x_i \leftrightarrow a^{1}_{N-i+1}$ and $\partial_i \leftrightarrow \partial_1^{N-i+1}$. It is also an example of the quantum Weyl algebras of \cite{GiaquintoQuantumWeylAlgebras1995}. \end{rmk}

\begin{definition} For $i = 0, 1, \dots, N$, set $\beta_i = 1+ \sum_{j =1}^{i} x_j \partial_j \in \DqN.$   \end{definition}

Direct computations yield the following result,  which shows that the elements $\beta_i$ have nice $q$-commutation properties with the generators, and can be thought of as ``partial grading operators'' or Euler operators. 

\begin{lemma}\label{lem:betacomm} Let $i,j \in \{1, \dots, N\}$. The elements $\beta_i$ and $\beta_j$ commute, and for $n \geq 1$, the following identities hold in $\DqN$: 
	$$ \partial_i x_i^n = q^{2n} x_i^n \partial_i + (q^{2n} -1)\beta_{i-1} x_i^{n-1}, \qquad \text{ and} \qquad \partial_i^n x_i = q^{2n} x_i \partial_i^n + (q^{2n} -1)\beta_{i-1} \partial_i^{n-1}.$$
	If $j > i$, then $ \beta_i x_j = x_j \beta_i  \ \text{and} \ \beta_i \partial_j = \partial_j \beta_i,$ while if $j \leq i$, then $ \beta_i x_j = q^2 x_j \beta_i \ \text{and} \ \beta_i \partial_j = q^{-2} \partial_j \beta_i.$ \end{lemma}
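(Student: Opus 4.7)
My plan is to establish the $q$-commutation rules (the final assertions of the lemma) first, then deduce both the commutativity of $\beta_i$ with $\beta_j$ and the two power identities from them, the latter by induction on $n$. The base case $n=1$ of the power identities is just the defining relation $\partial_i x_i = q^2 x_i\partial_i + (q^2-1)\beta_{i-1}$, so the real content is (i) the $q$-commutation between $\beta_i$ and a generator and (ii) the inductive step.

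For $\beta_i x_j$, I would expand $\beta_i = 1 + \sum_{k=1}^i x_k\partial_k$ and evaluate $x_k\partial_k \cdot x_j$ termwise via the defining relations. Three behaviors arise: when $k<j$, the factors $x_k x_j = q^{-1} x_j x_k$ and $\partial_k x_j = q x_j\partial_k$ cancel, so $x_k\partial_k$ commutes with $x_j$; when $k>j$, both relations contribute a factor $q$, yielding $x_k\partial_k\cdot x_j = q^2 x_j\cdot x_k\partial_k$; when $k=j$ (occurring only for $j\leq i$), the last defining relation produces the extra contribution $(q^2-1)x_j\beta_{j-1}$. Summing and using the identity $x_j\beta_{j-1} = x_j + \sum_{k<j} x_jx_k\partial_k$, the constant term and the $k<j$ summands conspire with the coefficient $(q^2-1)$ to promote the trivial commutation with the $k<j$ terms to $q^2$-commutation, reproducing $\beta_i x_j = q^2 x_j\beta_i$ when $j\leq i$. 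The case $j>i$ is cleaner because no $k=j$ summand appears. The assertions about $\beta_i\partial_j$ follow by the symmetric computation, using $\partial_k\partial_j = q\partial_j\partial_k$ for $k<j$ and rewriting $x_j\partial_j\partial_j$ via the same defining relation.

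The commutativity of $\beta_i$ and $\beta_j$ is then immediate: for each summand $x_k\partial_k$ appearing in $\beta_j$, the previous step shows that $\beta_i$ commutes with $x_k$ and with $\partial_k$ up to the \emph{same} scalar ($1$ if $k>i$, or $q^{\pm 2}$ if $k\leq i$), and these scalars cancel in the product. For the power identities I would induct on $n$: writing $\partial_i x_i^{n+1} = (\partial_i x_i^n)x_i$, invoking the inductive hypothesis, applying the base case to $\partial_i x_i$, and using that $\beta_{i-1}$ commutes with $x_i$ (from the first step, since $i>i-1$), the coefficients combine as $q^{2n}(q^2-1) + (q^{2n}-1) = q^{2(n+1)} - 1$, giving the claimed formula. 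The identity for $\partial_i^n x_i$ is entirely symmetric.

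The main obstacle, and the only moment that requires genuine care, is the cancellation in the $k=j$ case of the first step: one must check that the inhomogeneous $\beta_{j-1}$ contribution from the $\partial_j x_j$ relation conspires precisely with the constant and the $k<j$ summands to match the $q^2$-scaling coming from the $k>j$ part. This is exactly where the specific form of the fourth defining relation of $\DqN$ pulls its weight, and once that bookkeeping is verified, the rest of the argument is routine.
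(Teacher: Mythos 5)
Your argument is correct and is essentially the paper's own proof: the paper simply asserts that the lemma follows by ``direct computations'' from the defining relations, which is exactly what you carry out, in a sensible order (generator-level $q$-commutation with $\beta_i$ first, then $[\beta_i,\beta_j]=0$ and the power identities by induction). The one delicate point you flag does check out --- the $(q^2-1)x_j\beta_{j-1}$ contribution from the $k=j$ term promotes the constant and $k<j$ summands to $q^2$-scaling, and the inductive coefficients combine as $q^{2n}(q^2-1)+(q^{2n}-1)=q^{2(n+1)}-1$ --- so nothing further is needed.
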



\subsection{The center} We fix $q$ to be a primitive $\ell$-th root of unity in $\C$, where $\ell >1$ is odd. Let $Z_N$ denote the center of $\DqN$.  

\begin{prop}\label{prop:center} The center $Z_N$ is the subalgebra generated by the elements $x_1^\ell$, $\dots$, $x_N^{\ell}$, $\partial_1^\ell$, $\dots, \partial_N^\ell$, and is isomorphic to a polynomial algebra:	$$Z_N = \C[x_1^\ell, \dots, x_N^\ell, \partial_1^\ell, \dots, \partial_N^\ell].$$
\end{prop}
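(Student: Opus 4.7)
The plan decomposes into three steps. First, I would verify that the listed elements are central; second, that they are algebraically independent; third, that they span the entire center---this last step is where the real work lies.

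For centrality, the key facts are $q^\ell=1$ and, since $\ell$ is odd, $q^{2\ell}=1$. The $q$-commutation relations for $i\neq j$ immediately give $[x_i^\ell,x_j]=[x_i^\ell,\partial_j]=0$, and Lemma \ref{lem:betacomm} at $n=\ell$ gives $\partial_i x_i^\ell = q^{2\ell} x_i^\ell \partial_i + (q^{2\ell}-1)\beta_{i-1}x_i^{\ell-1} = x_i^\ell\partial_i$, showing $x_i^\ell$ is central; a symmetric argument handles $\partial_i^\ell$. Algebraic independence follows from Lemma \ref{lemma:DqNbasics}, as the monomials $\prod_i x_i^{\ell a_i} \prod_i \partial_i^{\ell b_i}$ form a subset of the PBW basis.

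For the converse, I would equip $\DqN$ with the $\Z^N$-grading $\deg(x_i)=e_i$, $\deg(\partial_i)=-e_i$. This is well-defined because each term in $\partial_i x_i = q^2 x_i\partial_i + (q^2-1)\beta_{i-1}$ is homogeneous of degree zero ($\beta_{i-1}$ itself is). The center then decomposes into homogeneous subspaces, reducing the problem to classifying central homogeneous elements. For such a $z=\sum_{\vec m-\vec n=\vec d} c_{\vec m,\vec n}\, x^{\vec m}\partial^{\vec n}$, I would compute $[z,x_k]$ and $[z,\partial_k]$ using Lemma \ref{lem:betacomm}, expand in the PBW basis, and collect coefficients to obtain a system of linear recurrences whose nonzero coefficients have the form $q^{2r}-1$. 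Since $\ell$ is odd, these vanish exactly when $\ell\mid r$, and propagating the recurrences from small-degree boundary terms forces $c_{\vec m,\vec n}=0$ unless every $m_k$ and every $n_k$ is divisible by $\ell$.

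The main obstacle is coordinating the recurrence analysis cleanly for general $N$: the $\beta_{i-1}$ correction couples coefficients along multi-index diagonals, and one must verify the vanishing propagates through the full lattice of indices. A slicker alternative I would attempt first is induction on $N$ using the PBW decomposition $\DqN = \bigoplus_{m,n\geq 0} \DqNm \cdot x_N^m \partial_N^n$: commutation with $x_N, \partial_N$ pins the outer exponents $(m,n)$ modulo $\ell$, while commutation with $x_i,\partial_i$ for $i<N$ yields eigenvalue conditions on the coefficients $a_{m,n}\in\DqNm$ that, combined with the inductive hypothesis, force $a_{m,n}$ into the claimed form. The base case $N=1$ reduces to an explicit recurrence analysis in the two-generator quantum Weyl algebra.
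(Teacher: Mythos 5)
Your fallback induction on $N$ --- decomposing $z=\sum_{m,n} a_{m,n}\,x_N^m\partial_N^n$ with $a_{m,n}\in\DqNm$, using commutation with $x_N,\partial_N$ to force $\ell\mid m,n$ through recurrences whose coefficients are of the form $q^{2r}-1$ (with $\ell$ odd), and then applying the inductive hypothesis to the coefficients --- is essentially the paper's own proof. The one step you gloss is the paper's preliminary Step 1: before the recurrence can be written cleanly one must show that $x_N$ and $\partial_N$ actually commute with each $a_{m,n}$ (done there via commutation with $\beta_{N-1}$, which forces $\ell \mid \sum_j(m_j-n_j)$ in each coefficient); your $\Z^N$-grading alone does not give this, but it follows from the same kind of eigenvalue argument you describe, so the proposal is correct and takes the same route.
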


\begin{proof} We proceed by induction on $N$. The base case $N=0$ is clear. Fix $N\geq 1$, and let $Z_\ell$ denote the subalgebra of $\DqN$ generated by the elements $x_1^\ell, \dots, x_N^\ell, \partial_1^\ell, \dots, \partial_N^\ell$. (The algebra $Z_\ell$ is often referred to as the $\ell$-center.) By Lemma \ref{lem:betacomm} and the defining relations of $\DqN$, the algebra $Z_\ell$ is contained in the center $Z_N$ of $\DqN$ and is isomorphic to the polynomial algebra in variables  $x_1^\ell, \dots, x_N^\ell, \partial_1^\ell, \dots, \partial_N^\ell$. Thus, it suffices to show that $Z_N$ is contained in $Z_\ell$.  To this end, let $z \in Z_N$. By Lemma \ref{lemma:DqNbasics}, we can write $$z = \sum_{m,n} z^\prime_{m,n}x_N^m \partial_N^m,$$ where the sum ranges over nonnegative integers $m,n$ and $z^\prime_{m,n}$ belongs to $\DqNm$. The inclusion $Z_N \subseteq Z_\ell$ (and hence the proposition) follows from the following claim, whose proof is divided into three steps:\\
	
\noindent {\bf Claim.} The element $z^\prime_{m,n}$ belongs to $Z_\ell$ if $\ell$ divides both $m$ and $n$. Otherwise, $z^\prime_{m,n}=0.$ \\
	
{\bf Step 1.} We argue that $x_N$ and $\partial_N$ commute with each $z^\prime_{m,n}$. First note that $\beta_{N-1}$ commutes with each $z^\prime_{m,n}$. This fact is a consequence of the observations that $z$ is central and that $\beta_{N-1}$ commutes with $x_N$ and $\partial_N$. Next, fix $m,n$ and abbreviate $z^\prime_{m,n}$ by $z^\prime$. Write 
$$z^\prime=\sum_{\underline{m},\underline{n}}c_{\underline{m},\underline{n}} x^{\underline{m}}\partial^{\underline{n}},$$
where the sum ranges over pairs of $(N-1)$-tuples $\underline{m} = (m_1, \dots, m_{N-1})$ and $\underline{n} = (n_1, \dots, n_{N-1})$, and $ x^{\underline{m}}\partial^{\underline{n}}$ abbreviates the element $x_1^{m_1} \cdots x_{N-1}^{m_{N-1}}\partial_1^{n_1} \cdots \partial_{N-1}^{n_{N-1}}.$ Since $\beta_{N-1}$ commutes with $z^\prime$, we use Lemma \ref{lem:betacomm} to obtain 
$$\sum_{\underline{m},\underline{n}}c_{\underline{m},\underline{n}} x^{\underline{m}}\partial^{\underline{n}}\beta_{N-1} = z^\prime \beta_{N-1} = \beta_{N-1} z^\prime = \sum_{\underline{m},\underline{n}} q^{\left(2 \sum_{j=1}^{N-1} (m_j - n_j)\right)} c_{\underline{m},\underline{n}} x^{\underline{m}}\partial^{\underline{n}}\beta_{N-1}.$$
It follows that $c_{\underline{m},\underline{n}}= 0$ whenever $\ell$ does not divide the sum $\sum_{j=1}^{N-1} (m_j - n_j)$. (We have used the hypothesis that $\ell$ is odd.) Consequently, using the defining relations of $\DqN$, we conclude that
$$x_N z^\prime = \sum_{\underline{m},\underline{n}}c_{\underline{m},\underline{n}} x_N x^{\underline{m}}\partial^{\underline{n}} = \sum_{\underline{m},\underline{n}} q^{ \sum_{j=1}^{N-1} (m_j - n_j)} c_{\underline{m},\underline{n}} x^{\underline{m}}\partial^{\underline{n}}\beta_{N-1} = z^\prime x_N.$$
An analogous argument shows that $\partial_N z^\prime = z^\prime \partial_N$. \\
	
{\bf Step 2.} We argue that $z^\prime_{m,n} =0$ if $\ell$ does not divide both $m$ and $n$. We have that
	\begin{align*}
		x_N z &= \sum_{m,n} z^\prime_{m,n} x_N^{m+1} \partial_N^n = \sum_{m \geq 1; n\geq 0} z^\prime_{m-1,n} x_N^{m} \partial_N^n, \qquad \text{and} \\
		z x_N &= \sum_{m,n} q^{2n}z^\prime_{m,n} x_N^{m+1} \partial_N^n + (q^{2n} -1 )\beta_{N-1} z^\prime_{m,n} x_N^{m} \partial_N^{n-1} \\ &= \sum_{m\geq 1; n\geq 0} q^{2n}z^\prime_{m-1,n} x_N^{m} \partial_N^n + \sum_{m,n}(q^{2(n+1)} -1 )\beta_{N-1} z^\prime_{m,n+1} x_N^{m} \partial_N^{n}\end{align*}
	Since $x_N z = z x_N$, we deduce that\footnote{One also deduces that $(q^{2(n+1)}-1)\beta_{N-1}z^\prime_{0, n+1} = 0$ for any $n \geq 0$, but we do not need this.}
	$$z^\prime_{m-1, n} = q^{2n} z^\prime_{m-1,n} + (q^{2(n+1)} - 1) \beta_{N-1} z^\prime_{m, n+1}$$
	for $m \geq 1$ and $n \geq 0$. Rearranging this equation, it is straightforward to verify that this identity gives
	$$(q^{2n} -1) z^\prime_{m,n} = (-1)^i (q^{2(n+i)} -1) \beta_{N-1}^i z^\prime_{m+i, n+i}$$
	for $m,n,i \geq 0$. Suppose that $\ell$ does not divide $n$, so that $q^{2n} -1 \neq 0$. There is a unique $i \in \{1, \dots, \ell-1\}$ such that $\ell$ divides $n+i$. For this choice of $i$, we have 
	$$z^\prime_{m,n} = \frac{(-1)^i (q^{2(n+i)} -1) \beta_{N-1}^i z^\prime_{m+i, n+i}}{q^{2n} -1} =  \frac{(-1)^i (0) \beta_{N-1}^i z^\prime_{m+i, n+i}}{q^{2n} -1} = 0.$$
	An analogous argument using the identity $\partial_N z = z\partial_N$ shows that $z^\prime_{m,n} = 0$ whenever $\ell$ does not divide $m$. \\
	
	{\bf Step 3.} Finally, we show that if $\ell$ divides both $m$ and $n$, then $z^\prime_{m,n}$ belongs to $Z_\ell$. Since $z_{{m},{n}}^{\prime}$ lies in the subalgebra $\sD_{N-1}\paren{Q}$ it suffices, by induction, to show that it lies in the center. This follows from the fact that $x_{j}$ and $\partial_{j}$ commute with $z$ for $1\leq j\leq N-1$ and that $\ell$ divides both $m$ and  $n$ whenever $z_{{m},{n}}^{\prime}$ is nonzero.
\end{proof}

\begin{prop}\label{prop:betaeqns} The following formulas hold in $\DqN$, for $i = 0, \dots, N$: $$\beta_i^\ell = 1 + \sum_{j=1}^{i-1} x_j^\ell \partial_j^\ell.$$ \end{prop}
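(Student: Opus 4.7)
My plan is to proceed by induction on $i$, with trivial base case $\beta_0 = 1$. For the inductive step, write $\beta_i = \beta_{i-1} + x_i\partial_i$, and set $A := \beta_{i-1}$, $X := x_i$, $Y := \partial_i$. Lemma~\ref{lem:betacomm} guarantees that $A$ is central in the subalgebra these three elements generate, and the defining relations of $\DqN$ specialise to $YX = q^2 XY + (q^2-1)A$. The inductive step then reduces to the purely algebraic identity
\[
(A + XY)^\ell \;=\; A^\ell + X^\ell Y^\ell,
\]
from which one reads off the one-step recursion $\beta_i^\ell = \beta_{i-1}^\ell + x_i^\ell \partial_i^\ell$, and the claimed closed form follows by telescoping in $i$.

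To prove the key identity, I would expand $(A + XY)^n = \sum_{k=0}^n c_{n,k}\, X^k Y^k A^{n-k}$ in the natural PBW-like basis. The iterated commutation $Y^k X = q^{2k} X Y^k + (q^{2k}-1) A Y^{k-1}$ (which is just Lemma~\ref{lem:betacomm} with $A$ playing the role of $\beta_{i-1}$) lets one compute
\[
X^k Y^k A^{n-k}(XY + A) \;=\; q^{2k}\bigl(X^{k+1} Y^{k+1} A^{n-k} + X^k Y^k A^{n+1-k}\bigr),
\]
which yields the two-term recurrence
\[
c_{n+1,k} \;=\; q^{2(k-1)}\, c_{n,k-1} + q^{2k}\, c_{n,k}, \qquad c_{1,0} = c_{1,1} = 1.
\]
Packaging the coefficients into the generating function $C_n(x) := \sum_k c_{n,k} x^k$ converts this recurrence into $C_{n+1}(x) = (1+x)\, C_n(q^2 x)$, which iterates immediately to
\[
C_n(x) \;=\; \prod_{k=0}^{n-1}\bigl(1 + q^{2k} x\bigr).
\]

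At $n = \ell$, the hypothesis that $\ell$ is odd forces $\gcd(2,\ell) = 1$, so $\{q^{2k} : 0 \le k < \ell\}$ exhausts the $\ell$-th roots of unity. The elementary identity $\prod_{\omega^\ell = 1}(1 + \omega x) = 1 + x^\ell$, whose left side for odd $\ell$ has roots precisely the $\ell$-th roots of $-1$, collapses $C_\ell(x)$ to $1 + x^\ell$. Hence $c_{\ell,0} = c_{\ell,\ell} = 1$ and $c_{\ell,k} = 0$ for $0 < k < \ell$, which is exactly the key identity.

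The bulk of the work is the derivation of the recurrence, which is mechanical but requires careful bookkeeping of commutation factors arising from moving $Y^k$ past $X$. The odd-$\ell$ hypothesis enters in an essential way only at the final root-of-unity product; away from that step the argument is a general fact about two-variable $q$-commuting operators with a central deformation parameter, and it would work over any commutative base containing $A$.
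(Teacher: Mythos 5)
Your proof is correct, but it takes a genuinely different route from the paper's. Both arguments reduce to the one-step identity $(\beta_{i-1}+x_i\partial_i)^\ell=\beta_{i-1}^\ell+x_i^\ell\partial_i^\ell$ via the commutation rules of Lemma \ref{lem:betacomm}, but the paper never computes the intermediate coefficients: it writes $(x_N\partial_N+\beta_{N-1})^\ell=q^{\ell(\ell-1)}x_N^\ell\partial_N^\ell+\sum_{k=1}^{\ell-1}c_k^{(\ell)}x_N^k\partial_N^k+\beta_{N-1}^\ell$ with unspecified $c_k^{(\ell)}\in\mathcal{D}_q(\C^{N-1})$, observes that $\sum_k c_k^{(\ell)}x_N^k\partial_N^k$ is a difference of central elements, and then invokes the center computation (Proposition \ref{prop:center}) to force all $c_k^{(\ell)}=0$. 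You instead make the coefficients explicit: your recurrence $c_{n+1,k}=q^{2(k-1)}c_{n,k-1}+q^{2k}c_{n,k}$, the generating-function identity $C_{n+1}(x)=(1+x)C_n(q^2x)$, and the product formula $C_n(x)=\prod_{k=0}^{n-1}(1+q^{2k}x)$ give a $q$-binomial-theorem proof, with the root-of-unity collapse $\prod_{\omega^\ell=1}(1+\omega x)=1+x^\ell$ (using that $\ell$ odd makes $q^2$ primitive, exactly where the paper also uses oddness) doing the final work; your intermediate computation $X^kY^kA^{n-k}(XY+A)=q^{2k}\bigl(X^{k+1}Y^{k+1}A^{n-k}+X^kY^kA^{n+1-k}\bigr)$ checks out, and note that you never need linear independence of the monomials $X^kY^kA^{n-k}$, only the rewriting identities, so the argument is complete. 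The trade-off: your proof is self-contained and isolates the statement as a purely two-variable fact about a $q$-commuting pair with a central shift, independent of Proposition \ref{prop:center}; the paper's proof is shorter but leans on the prior description of the center (which is legitimate, as that proposition precedes this one). One small point: the upper limit in the displayed statement is a typo --- it should be $\sum_{j=1}^{i}$, which is what both your telescoping and the paper's own proof (which concludes $\beta_N^\ell=1+\sum_{j=1}^{N}x_j^\ell\partial_j^\ell$) actually establish.
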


\begin{proof} We proceed by induction on $N$. The base case $N=0$ is clear. Fix $N \geq 1$. The inclusion of $\DqNm$ in $\DqN$ implies $\beta_i^\ell$ satisfies the formula for $i = 0, \dots, N-1$. We claim that the following identity holds in $\DqN$, for any $n \geq1$:
	$$(x_N \partial_N + \beta_{N-1})^n = q^{n(n-1)} x_N^n \partial_N^n + \sum_{k=1}^{n-1} c_k^{(n)} x_N^k \partial_N^k + \beta_{N-1}^n,$$
	where $c_k^{(n)}$ are certain elements of $\DqNm$. The claim is easily verified by induction, using the following facts: 
	\begin{enumerate}
		\item The element $\beta_{N-1}$ commutes with each of $x_N$ and $\partial_N$. 
		\item For any $k \geq 0$, we have $x_N^k \partial_N^k x_N \partial_N = q^{2k} x_N^{k+1} \partial_N^{k+1} + (q^{2k} -1) x_N^k \partial_N^k.$
	\end{enumerate}
	
Specializing the above formula for $n = \ell$, rearranging terms, and noting that $\beta_N = x_N \partial_N + \beta_{N-1}$, we obtain  $$\sum_{k=1}^{\ell-1} c_k^{(\ell)} x_N^k \partial_N^k = \beta_{N}^\ell - x_N^\ell \partial_N^\ell - \beta_{N-1}^\ell.$$ The right-hand side is central, so it follows that the left-hand side is also central. By Proposition \ref{prop:center}, we conclude that $c_k^{(\ell)} = 0$ for $k = 1, \dots, \ell-1$. Therefore, $$\beta_N^\ell =  x_N^\ell \partial_N^\ell + \beta_{N-1}^\ell = 1 + \sum_{j=1}^{N} x_j^\ell \partial_j^\ell.$$ \end{proof}

\begin{rmk} The elements $c_k^{(n)}$  satisfy the recursive formula $c_k^{(n)} = q^{2k} c_k^{(n-1)} \beta_{N-1} + q^{2(k-1)} c_{k-1}^{(n-1)}$ and  thus can be easily related  to quantum factorials.\end{rmk}


\subsection{The Azumaya locus} We include a few generalities on Azumaya algebras. Let $A$ be an algebra over $\C$ and let $Z = Z(A)$ be its center. 

\begin{definition} Suppose $A$ is finitely generated and projective as a $Z$-module. The Azumaya locus of $A$ is the subset of the maximal spectrum of $Z$ consisting of points  $\mathfrak m \in \Specm(Z)$ such that there exists $n >0$ and an isomorphism
	$$A \ot_Z Z /\mathfrak m \simeq \Mat_{n \times n} (\C).$$ If the Azumaya locus of $A$ is all of $\Specm(Z)$, we say that $A$ is Azumaya over $Z$. \end{definition}

In other words, if we regard $A$ as defining a sheaf of algebras over $\Spec(Z)$, then $A$ is Azumaya if this sheaf is \'etale-locally the endomorphisms of a vector bundle on $\Spec(Z)$.

We now return to the setting of difference operators, and continue to fix $q$ to be a primitive $\ell$-th root of unity in $\C$, where $\ell >1$ is odd.  We identify $\Spec(Z_N)$ with the cotangent bundle $T^* \C^N$ via Proposition \ref{prop:center}, and regard $\DqN$ as a coherent sheaf of algebras on $T^*\C^N$. 

\begin{definition} Define $Z_N^\circ = Z_N[\{(\beta_i^\ell)\inv \}]$ to be the localization of $Z_N$ along the multiplicative set generated by the $\beta_i^\ell$. Define $\DqN^\circ = \DqN \otimes_{Z_N} Z_N^\circ$.  \end{definition}

Using Propositions \ref{prop:center} and \ref{prop:betaeqns}, we identify the closed points of $\Spec(Z_N^\circ)$ with the subset 
$$\{\nu = (\nu_j, \check \nu_j)_{j=1}^N \in T^* \C^N \ | \ 1 +  \sum_{j=1}^{i-1} \nu_j \check \nu_j \neq 0 \text{ for all $i$} \} \subseteq T^* \C^N.$$

\begin{lemma} The multiplicative set generated by the $\beta_i$ satisfies the Ore condition and $\DqN^\circ$ coincides with the localization $\DqN [ \{ \beta_i\inv \} ].$ \end{lemma}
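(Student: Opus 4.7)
The plan is to verify the Ore condition by establishing that each $\beta_i$ is a normal element of $\DqN$, and then to argue that inverting the $\beta_i$ is the same as inverting the $\beta_i^\ell$.

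First, I would use Lemma \ref{lem:betacomm} to observe that $\beta_i$ satisfies $\beta_i x_j = q^{\pm 2} x_j \beta_i$ and $\beta_i \partial_j = q^{\mp 2} \partial_j \beta_i$ for every $j$, where the sign depends only on whether $j \leq i$ or $j > i$. Consequently, for any monomial $m = x_1^{m_1} \cdots x_N^{m_N} \partial_1^{n_1} \cdots \partial_N^{n_N}$ of the spanning set given by Lemma \ref{lemma:DqNbasics}, there is an integer $k_i(m) \in \Z$ with $\beta_i \, m = q^{2k_i(m)} m \, \beta_i$. Thus $\beta_i \DqN = \DqN \beta_i$, i.e., each $\beta_i$ is a normal element of $\DqN$. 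Since the $\beta_i$'s commute pairwise, any element of the multiplicative set $S$ they generate is again normal, and $\beta \DqN = \DqN \beta$ for every $\beta \in S$. It is standard that a multiplicative subset of normal elements in a (noncommutative) ring satisfies the two-sided Ore condition: given $s \in S$ and $a \in \DqN$, pick $a' \in \DqN$ with $a s = s a'$, which yields the left Ore relation (and similarly on the right). Thus the localization $\DqN[S^{-1}] = \DqN[\{\beta_i^{-1}\}]$ exists.

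Next, I would show this localization coincides with $\DqN^\circ = \DqN \otimes_{Z_N} Z_N^\circ$. By Proposition \ref{prop:betaeqns}, each $\beta_i^\ell$ lies in $Z_N$, and in $\DqN[\{\beta_i^{-1}\}]$ the element $\beta_i^\ell$ becomes invertible with inverse $(\beta_i^{-1})^\ell$. Hence the universal property of the localization at $\{(\beta_i^\ell)^{-1}\}$ supplies a canonical map $\DqN^\circ \to \DqN[\{\beta_i^{-1}\}]$. Conversely, in $\DqN^\circ$ the central element $\beta_i^\ell$ is invertible, and the identity $\beta_i \cdot \beta_i^{\ell-1} = \beta_i^{\ell-1} \cdot \beta_i = \beta_i^\ell$ shows that $\beta_i$ itself is invertible there, with inverse $\beta_i^{\ell-1} (\beta_i^\ell)^{-1}$. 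The universal property of $\DqN[\{\beta_i^{-1}\}]$ then provides an inverse map, completing the identification $\DqN^\circ = \DqN[\{\beta_i^{-1}\}]$.

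The whole argument is essentially formal once normality of $\beta_i$ is in hand; no real obstacle arises, and the only point requiring care is to confirm that the exponents coming from the $q$-commutation relations in Lemma \ref{lem:betacomm} really do only depend on the monomial (they do, since the $\beta_i$ commutes past each generator up to a scalar power of $q$).
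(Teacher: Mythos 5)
Your proposal is correct and follows the same route as the paper, which simply cites the $q$-commutativity of the $\beta_i$ from Lemma \ref{lem:betacomm}; you merely spell out the normality/Ore argument and the universal-property identification of $\DqN\otimes_{Z_N}Z_N^\circ$ with $\DqN[\{\beta_i^{-1}\}]$ that the paper leaves implicit. (One trivial slip: for $j>i$ the element $\beta_i$ commutes with $x_j$ and $\partial_j$ on the nose, i.e.\ the power of $q$ is $q^0$ rather than $q^{\pm2}$, but this does not affect your argument.)
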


\begin{proof} The result follows immediately from the $q$-commutativity properties of the Euler operators $\beta_i$, as stated in Lemma \ref{lem:betacomm}. \end{proof}

\begin{prop}\label{prop:tensor} The algebra  $\DqN^\circ$ is isomorphic to the tensor product of the algebras $\left(\D_q(\C^1)^\circ \right)^{\ot N}$. \end{prop}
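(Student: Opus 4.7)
The plan is to exhibit inside $\Dq(\C^N)^\circ$ a collection of $N$ mutually commuting subalgebras, each isomorphic to $\Dq(\C^1)^\circ$, which jointly generate the ambient algebra.  Setting $m := (\ell-1)/2$, an integer because $\ell$ is odd, I define rescaled generators
$$\hat{x}_i := x_i\,\beta_{i-1}^{m}, \qquad \hat{\partial}_i := \partial_i\,\beta_{i-1}^{-m-1}, \qquad i = 1, \dots, N,$$
in $\Dq(\C^N)^\circ$, with the convention $\beta_0 = 1$.  The exponents are pinned down by two numerical constraints: $m + (-m-1) = -1$ (to produce a clean rank-one correction term) and $2m+1 = \ell$ (to eliminate the cross-slot $q$-commutations).

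The first step is to verify that each pair $(\hat{x}_i, \hat{\partial}_i)$ satisfies the defining relation of $\Dq(\C^1)$.  Using Lemma \ref{lem:betacomm}, and in particular that $\beta_{i-1}$ commutes with both $x_i$ and $\partial_i$, one computes
$$\hat{\partial}_i \hat{x}_i = \partial_i x_i \beta_{i-1}^{-1} = \bigl( q^2 x_i \partial_i + (q^2-1)\beta_{i-1} \bigr)\beta_{i-1}^{-1} = q^2 \hat{x}_i \hat{\partial}_i + (q^2-1).$$
The second step is cross-slot commutativity: for $i \neq j$, each of the four products $\hat{x}_j \hat{x}_i$, $\hat{\partial}_j \hat{x}_i$, $\hat{x}_j \hat{\partial}_i$, $\hat{\partial}_j \hat{\partial}_i$ equals its opposite-order counterpart up to a scalar $q^{\pm(2m+1)} = q^{\pm\ell} = 1$; the single $q$-factor from the cross-slot relations of $\Dq(\C^N)$ is cancelled exactly by the $q^{\pm 2m}$-factors produced when commuting $\beta^m$-powers past generators.

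These computations yield pairwise commuting algebra maps $\phi_i \colon \Dq(\C^1) \to \Dq(\C^N)^\circ$ sending $x \mapsto \hat{x}_i$ and $\partial \mapsto \hat{\partial}_i$.  Since $\hat{\beta}_i := 1 + \hat{x}_i \hat{\partial}_i = \beta_i \beta_{i-1}^{-1}$ (using the identity $x_i\partial_i = \beta_i - \beta_{i-1}$) is invertible in $\Dq(\C^N)^\circ$, each $\phi_i$ extends to the localization $\Dq(\C^1)^\circ$, and these assemble into an algebra map
$$\Phi \colon \bigl(\Dq(\C^1)^\circ\bigr)^{\otimes N} \longrightarrow \Dq(\C^N)^\circ.$$
Surjectivity is immediate from the telescoping identity $\beta_{i-1} = \prod_{k<i} \hat{\beta}_k$ (valid since the $\beta_k$ commute, Lemma \ref{lem:betacomm}), which places $\beta_{i-1}^{\pm 1}$ in the image, and hence also $x_i = \hat{x}_i \beta_{i-1}^{-m}$ and $\partial_i = \hat{\partial}_i \beta_{i-1}^{m+1}$.

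For injectivity, Lemma \ref{lemma:DqNbasics} combined with Propositions \ref{prop:center} and \ref{prop:betaeqns} implies that both source and target are free of rank $\ell^{2N}$ over their respective centers, which are finitely generated commutative $\C$-domains of Krull dimension $2N$.  The restriction of $\Phi$ to centers is surjective by the same telescoping argument applied to $\ell$-th powers, hence is an isomorphism between equal-dimensional domains; consequently $\Phi$ is a surjection of free modules of the same finite rank over isomorphic rings, and so is an isomorphism.  The main obstacle throughout is the combinatorial tuning of the two exponents, which must simultaneously satisfy $2m+1 \equiv 0 \pmod{\ell}$ and $m+(-m-1) = -1$; these are compatible precisely because $\ell$ is odd.
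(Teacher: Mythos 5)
Your proof is correct, but it takes a genuinely different route from the paper. The paper adjoins formal square roots $\alpha_i$ of the Euler operators ($\alpha_i^2=\beta_i$) to form an auxiliary algebra $\widetilde{\D}_q(\C^N)^\circ$, twists the generators by $\alpha_i^{-1}$ to kill the cross-slot $q$-factors, and then identifies $\DqN^\circ$ and $\left(\D_q(\C^1)^\circ\right)^{\ot N}$ as corresponding subalgebras of the big algebra; that argument is uniform in $q$ and never needs a separate injectivity/surjectivity step, at the cost of introducing and justifying $\widetilde{\D}_q(\C^N)^\circ$. You instead exploit the root-of-unity hypothesis to realize the needed ``half-powers'' of the Euler operators as honest integer powers: with $2m+1=\ell$ the twist $\hat{x}_i=x_i\beta_{i-1}^{m}$, $\hat{\partial}_i=\partial_i\beta_{i-1}^{-m-1}$ lives inside $\DqN^\circ$ itself, the exponent condition $m+(-m-1)=-1$ gives the rank-one relation $\hat{\partial}_i\hat{x}_i=q^2\hat{x}_i\hat{\partial}_i+(q^2-1)$, and $q^{2m+1}=q^\ell=1$ kills the cross-slot factors (I checked these computations against Lemma \ref{lem:betacomm} and the defining relations; they are right, and your identity $1+\hat{x}_i\hat{\partial}_i=\beta_i\beta_{i-1}^{-1}$ together with the telescoping product does give surjectivity, including invertibility of the $\beta_i$ themselves). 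The price you pay is the final injectivity step, which needs the freeness of $\DqN$ (and of each tensor factor) of rank $\ell^{2N}$ over its center, the identification of the center of the localization and of the tensor product with the localized/tensored centers, and the dimension argument for the induced surjection of central domains; these are all standard and consistent with facts the paper itself invokes (e.g.\ in the proof of Theorem \ref{thm:DqNAzumaya}), but they make your argument specific to the root-of-unity setting, whereas the paper's square-root trick proves the statement for the localization $\DqN[\{\beta_i^{-1}\}]$ at any $q$. Since the proposition is only stated and used at a root of unity, this restriction is harmless, and your proof has the advantage of staying entirely inside $\DqN^\circ$.
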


\begin{proof} Let $\widetilde{\D}_q(\C^N)^\circ$ be the algebra obtained from $\DqN^\circ$ by adjoining elements $\alpha_i$ such that $\alpha_i^2 = \beta_i$, and if $j > i$, then $ \alpha_i x_j = x_j \alpha_i  \ \text{and} \ \alpha_i \partial_j = \partial_j \alpha_i,$ while if $j \leq i$, then $ \alpha_i x_j = q x_j \alpha_i \ \text{and} \ \alpha_i \partial_j = q^{-1} \partial_j \alpha_i.$ We see that $\DqN^\circ$  embeds into $\widetilde{\D}_q(\C^N)^\circ$ as  the subalgebra generated by $x_i$, $y_i$, and $\beta_i\inv$. Set $w_i=x_i\alpha_i\inv, z_j=-q \partial_j \alpha_i\inv$, as elements of $\widetilde{\D}_q(\C^N)^\circ$. Since each $\alpha_i$ is invertible, there is an isomorphism between  $\DqN^\circ$ and the  subalgebra of $\widetilde{\D}_q(\C^N)^\circ$ generated by $z_i$, $w_i$, and $\beta_i\inv$. One shows by direct computation that the elements $w_i$ and $z_i$ satisfy: 
	$$w_j w_i = w_i w_j \qquad \text{\rm and} \qquad  z_j z_i = z_i z_j \qquad \text{for $j >i$},$$ 
	$$z_j w_i = w_i z_j \qquad \text{for $j \neq i$},$$
$$w_i z_i = q^{2} z_iw_i + (q^2-1).$$
Thus we have an isomorphism of algebras $$\widetilde{\D}_q(\C^N)^\circ \stackrel{\sim}{\longrightarrow} \left(\widetilde{\D}_q(\C^1)^\circ \right)^{\ot N}$$ taking $z_i$ to the $x$-variable of the $i$-th factor, $w_i$ to the $\partial$-variable of the $i$-th factor, and $\alpha_i$ to the $\alpha\inv$-variable of the $i$-th factor. It follows that this isomorphism restricts to an isomorphism between the subalgebra of $\widetilde{\D}_q(\C^N)^\circ$ generated by $z_i$, $w_i$, and $\beta_i\inv$ and the subalgebra  $\left({\D}_q(\C^1)^\circ \right)^{\ot N}$ of $ \left(\widetilde{\D}_q(\C^1)^\circ \right)^{\ot N}$. Since the former is isomorphic to $\DqN^\circ$, we obtain an isomorphism $ \DqN^\circ \stackrel{\sim}{\longrightarrow}  \left({\D}_q(\C^1)^\circ \right)^{\ot N}  $. We summarize this discussion in the following diagram:
\[\xymatrix{
 \DqN^\circ \ar[rr]^\sim \ar@{_{(}->}[d]  & & \langle z_i, w_i, \beta_i\inv \rangle \ar@{_{(}->}[d] \ar[rr]^\sim & &  \left({\D}_q(\C^1)^\circ \right)^{\ot N}  \ar@{_{(}->}[d]  \\
\widetilde{\D}_q(\C^N)^\circ \ar[rr]^\sim & & \widetilde{\D}_q(\C^N)^\circ \ar[rr]^\sim & & \left(\widetilde{\D}_q(\C^1)^\circ \right)^{\ot N}
} \] \end{proof}

\begin{theorem}\label{thm:DqNAzumaya} Suppose $q$ is a primitive $\ell$-th root of unity, where $\ell > 1$ is odd. The Azumaya locus of $\DqN$ is $\Spec(Z_N^\circ).$ In particular, $\DqN^\circ$ is Azumaya over $Z_N^\circ$, and its rank is $\ell^N \times \ell^N$. \end{theorem}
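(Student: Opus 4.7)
The plan is to reduce to the case $N=1$ via Proposition \ref{prop:tensor}, and then verify the Azumaya property for $\Dq(\C)^\circ$ over $Z_1^\circ$ by constructing explicit irreducible $\ell$-dimensional modules at each closed point of $\Spec(Z_1^\circ)$.

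By Proposition \ref{prop:tensor}, $\DqN^\circ \cong (\Dq(\C)^\circ)^{\otimes N}$ as $\C$-algebras, and consequently $Z_N^\circ \cong (Z_1^\circ)^{\otimes N}$. Since a tensor product of Azumaya algebras over their centers is again Azumaya with ranks multiplying, it suffices to prove the $N=1$ case: that $\Dq(\C)^\circ$ is Azumaya of rank $\ell \times \ell$ over $Z_1^\circ$. By Lemma \ref{lemma:DqNbasics} together with the $q$-commutation relations and Proposition \ref{prop:center}, the monomials $\{x^m \partial^n : 0 \le m, n < \ell\}$ form a free $Z_1^\circ$-basis of $\Dq(\C)^\circ$, so every closed fiber has $\C$-dimension $\ell^2$.

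For each point $(\nu, \check\nu) \in \Spec(Z_1^\circ)$ (so $1 + \nu\check\nu \neq 0$), I would construct an $\ell$-dimensional module $V_{\nu,\check\nu}$ with basis $v_0, \ldots, v_{\ell-1}$ via $x v_k = \alpha_k v_{k+1}$ and $\partial v_k = \gamma_k v_{k-1}$ (indices taken mod $\ell$), with $\beta_1 = 1 + x\partial$ acting as the diagonal scalar $\lambda q^{2k}$ on $v_k$ for some fixed $\ell$-th root $\lambda$ of $1 + \nu\check\nu$. The defining relation $\partial x = q^2 x \partial + (q^2-1)$ then forces $\alpha_k \gamma_{k+1} = \lambda q^{2(k+1)} - 1$, while the central character conditions $\prod_k \alpha_k = \nu$ and $\prod_k \gamma_k = \check\nu$ can be arranged by a concrete choice of scalars---allowing at most one $\alpha_k$ (resp.\ $\gamma_k$) to vanish in the degenerate case $\nu = 0$ (resp.\ $\check\nu = 0$), with the resulting $V_{\nu,\check\nu}$ remaining irreducible because it is cyclic under $x$ (or $\partial$) alone from an appropriate vector. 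By Jacobson density, the induced map from the fiber $\Dq(\C)^\circ \otimes_{Z_1^\circ} \C_{(\nu,\check\nu)}$ to $\End(V_{\nu,\check\nu}) \cong \Mat_\ell(\C)$ is surjective, and the dimension match $\ell^2 = \ell^2$ promotes it to an isomorphism, establishing the Azumaya property.

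The main obstacle is the bookkeeping needed to exhibit $V_{\nu,\check\nu}$ uniformly at all points of $\Spec(Z_1^\circ)$, in particular ensuring that the scalar choices preserve irreducibility in the degenerate strata $\nu=0$ or $\check\nu=0$. For the remaining claim that the Azumaya locus equals $\Spec(Z_N^\circ)$ and not a larger open set, at any closed point of $\Spec(Z_N) \setminus \Spec(Z_N^\circ)$ some $\beta_i^\ell$ vanishes, so $\beta_i$ becomes nilpotent in the fiber; one can check that this forces the PI-degree of the fiber to drop below $\ell^N$, so the point cannot be in the Azumaya locus.
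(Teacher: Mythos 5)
Your overall strategy is sound and, for the positive direction, follows the same reduction as the paper: both use Proposition \ref{prop:tensor} to write $\DqN^\circ \cong (\Dq(\C^1)^\circ)^{\otimes N}$ and then multiply up ranks. Where you genuinely diverge is in how the rank-one case is handled: the paper simply cites \cite[Theorem 3.12]{GanevQuantizationsmultiplicativehypertoric2018} for the statement that $\Dq(\C^1)^\circ$ is Azumaya over its center, whereas you propose a self-contained verification by constructing, at each closed point $(\nu,\check\nu)$ with $1+\nu\check\nu\neq 0$, an explicit $\ell$-dimensional module and invoking density plus the dimension count $\ell^2$. This is a legitimate and more elementary route, and the arithmetic does work out: the consistency constraint is $\prod_k(\lambda q^{2k}-1)=\lambda^\ell-1=\nu\check\nu$, which matches $\prod_k\alpha_k\cdot\prod_k\gamma_k$, so the scalar choices exist in every stratum. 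Two points need tightening, however. First, irreducibility in the degenerate strata does not follow from the module being ``cyclic under $x$ (or $\partial$) alone'' --- cyclic modules need not be simple; the correct argument is that $\beta_1$ acts with the $\ell$ distinct eigenvalues $\lambda q^{2k}$ (distinct because $q^2$ is a primitive $\ell$-th root of unity and $\lambda\neq 0$, since $\lambda^\ell=1+\nu\check\nu\neq 0$), so any nonzero submodule contains some $v_k$, and one then reaches all other basis vectors using the nonvanishing $\alpha$'s and $\gamma$'s.

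Second, your argument for the reverse inclusion (that no point with $\beta_i^\ell=0$ lies in the Azumaya locus) is not correct as stated: the mere fact that $\beta_i$ is nilpotent in the fiber does not force the PI-degree to drop, since matrix algebras contain plenty of nilpotent elements. What saves the argument --- and is exactly the paper's mechanism, run in the contrapositive --- is that $\beta_i$ is a \emph{normal} element by Lemma \ref{lem:betacomm} ($\beta_i$ commutes or $q^{\pm 2}$-commutes with every generator), so its image generates a two-sided ideal $\beta_i\D_\nu$ of the fiber $\D_\nu$. This ideal is nonzero (the image of $\beta_i$ is nonzero in the PBW-type basis of the fiber) and proper (it cannot contain $1$, as $\beta_i$ is nilpotent when $\beta_i^\ell$ specializes to $0$), so $\D_\nu$ is not simple and in particular is not a matrix algebra. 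With normality inserted, your step becomes correct and is essentially the same ideal-theoretic argument the paper uses to show the Azumaya locus is contained in $\Spec(Z_N^\circ)$; the PI-degree language is an unnecessary detour.
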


\begin{proof}[Proof of Theorem \ref{thm:DqNAzumaya}.] It is immediate from Proposition \ref{prop:center} that $\DqN$ is finitely generated and projective (in fact free) as a module over its center.   We show first that  $\Spec(Z_N^\circ)$ contains the Azumaya locus of $\DqN$. Fix $ \nu = (\nu_i, \check \nu_i)_{i=1}^N \in \Spec(Z_N)$. The fiber of $\DqN$ over $\nu$ is the algebra
$$\D_\nu:=\DqN \slash\DqN\paren{x_{i}^\ell -\nu_{i},\partial^{\ell}_{i}-\check \nu_{i}}.$$
 Suppose that $\nu \in \Spec(Z_N)$ is a closed point in the Azumaya locus, so $\D_\nu$ is a matrix algebra. The $q$-commutativity of the $\beta_i$ with the generators of $\DqN$ (Lemma \ref{lem:betacomm}, parts 3 and 4) implies that the image of each $\beta_i$ generates a nonzero 2-sided ideal in $\D_\nu$. As a matrix algebra, $\D_\nu$ has no nonzero proper ideals. We conclude that the image of each $\beta_i$ is invertible. Thus, $\nu$ belongs to $\Spec(Z_N^\circ)$. For the reverse direction, it follows from \cite[Theorem 3.12]{GanevQuantizationsmultiplicativehypertoric2018} that ${\D}_q(\C^1)^\circ$ is Azumaya over its center. The tensor product of matrix algebras is again a matrix algebra, and hence $\left({\D}_q(\C^1)^\circ \right)^{\ot N} $ is Azumaya over its center. The claim now follows from Proposition \ref{prop:tensor}. \end{proof}

\begin{rmk}
We note in passing that Theorem \ref{thm:DqNAzumaya} can be proved alternatively by exhibiting an explicit isomorphism between $\DqN^\circ$ and an Ore localization of the standard quantum torus on $2N$ generators, and then comparing the central embeddings.
\end{rmk}
	

\section{The reflection equation algebra for $\GL_2$}\label{sec:qmm}

\subsection{Basic definitions}

Fix a positive integer $n$. For  an $n \times n$ matrix $M$, set $M_1 = M \ot \id$ and $M_2 = \id \ot M$, where $\id = \id_n$ is the $n \times n$ identity matrix. Note that $M_1$ and $M_2$ are matrices of dimension $n^2 \times n^2$.   For $q \in \C^\times$,  define the following two matrices:
$$R = \begin{bmatrix}
q & 0 &0 &0  \\
0 & 1 &0 &0  \\
0 & q-q\inv &1 &0  \\
0 & 0 &0 & q 
\end{bmatrix}, \qquad R_{21} = \begin{bmatrix}
q & 0 &0 &0  \\
0 & 1 & q-q\inv &0  \\
0 & 0 &1 &0  \\
0 & 0 &0 &q
\end{bmatrix}  $$

\begin{definition}For $q \in \C^\times$, define an algebra $\O_q^+(\GL_2)$ as generated by four elements, $a$, $b$, $c$, and $d$, organized in a two by two matrix $L = \begin{bmatrix} a & b \\ c&  c \end{bmatrix}$, with  relations given by the entries of the matrix equation \begin{equation}\label{eq:refleq} R_{21}L_1RL_2 = L_2R_{21}L_1R.\end{equation}\end{definition}

\begin{lemma}\label{lem:oqgl2relns} We have the following:
	\begin{enumerate}
		\item 	The quantum determinant $\det_q = ad - q^2bc$  and quantum trace $\text{\rm tr}_q = a+ q^{-2}d$ are central elements of $\O_q^+(\GL_2)$. 
		\item For $n,m \geq 0$, the following relations hold:
		$$ d^n a^m = a^m d^n \qquad \qquad \qquad  d^n b^m = q^{2nm} b^m d^n \qquad \qquad \qquad d^n c^m = q^{-2nm} c^m d^n$$
		$$b^n a = a b^n + (q^{2(n-1)}-q^{-2}) b^n d \qquad \qquad  \qquad c^n a = a c^n + (q^{-2} - q^{2(n-1)}) d c^n$$
		\begin{align*} c^n b &= b c^n+ (q^{2(n-1)} - q^{-2}) adc^{n-1} + (q^{-2n} - 1) c^{n-1}d^2 \\ c b^n &= b^n c + (1- q^{-2n}) adb^{n-1} + \left(\frac{-q^{4n} + q^{4n-2} - q^{2(n-1)} + 1)}{q^{4}} \right) b^{n-1}d^2 \end{align*}
		\item The left ideal generated by $d$ coincides with the right ideal generated by $d$.
	\end{enumerate}
	 \end{lemma}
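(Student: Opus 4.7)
The plan is to begin by expanding the defining matrix reflection equation $R_{21}L_1 R L_2 = L_2 R_{21} L_1 R$, which is an equation of $4\times 4$ matrices, into its sixteen scalar components. After discarding duplicates and tautologies, this will produce a short list of quadratic defining relations among $a, b, c, d$. In particular one expects to obtain $ad = da$, $db = q^2 bd$, $dc = q^{-2} cd$, together with explicit expressions for the products $ab$ and $ac$, and for the commutator $bc - cb$, in terms of monomials of length at most two involving $a$, $d$. This first step is computational but mechanical, and it is the foundation on which the rest of the lemma rests.

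Given these quadratic relations, part (1) is verified by direct calculation: one checks that $\det_q = ad - q^2 bc$ and $\text{\rm tr}_q = a + q^{-2}d$ each commute with every generator. The two summands of each central element produce contributions that cancel precisely, using the commutation identities and the explicit formula for $bc - cb$. Part (2) is then proved by induction on the exponents $n$ and $m$. For the simpler identities ($d^n a^m$, $d^n b^m$, $d^n c^m$) the induction is immediate from the quadratic relations. For $b^n a$ and $c^n a$, one writes $b^n a = b^{n-1}(ba)$, substitutes the quadratic relation for $ba$ and uses the inductive hypothesis together with the $q$-commutativity of $b$ with $d$. The expansions for $c^n b$ and $cb^n$ are the most delicate: they require collecting telescoping sums in powers of $q$ arising each time one moves a $c$ past a $b$ and rewrites the resulting correction $(q-q^{-1})(\text{stuff})$ using parts already established.

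Part (3) follows as a corollary of part (2). From the identities $d^n a^m = a^m d^n$, $d^n b^m = q^{2nm} b^m d^n$, and $d^n c^m = q^{-2nm} c^m d^n$, each generator $x \in \{a,b,c,d\}$ satisfies $xd = q^{k(x)} dx$ for some integer $k(x)$. Iterating along any word $w$ in the generators yields $wd = q^{k(w)} dw$ for some $k(w) \in \Z$, and the claim $\O_q^+(\GL_2) \cdot d = d \cdot \O_q^+(\GL_2)$ follows by linearity. The principal obstacle is the first step: the matrix expansion is tedious and error-prone, and getting the signs and powers of $q$ correct there is essential because every subsequent claim in the lemma (especially the rather asymmetric formulas for $c^n b$ and $cb^n$) hinges on the exact form of the basic quadratic relations.
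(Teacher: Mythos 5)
Your proposal is correct and follows essentially the same route as the paper: expand the reflection equation into the six explicit quadratic relations ($da=ad$, $db=q^2bd$, $dc=q^{-2}cd$, plus the expressions for $ba$, $ca$, $cb$), verify centrality of $\det_q$ and $\mathrm{tr}_q$ directly, and obtain the higher-power identities by induction, with part (3) an immediate consequence of the $q$-commutation of $d$ with each generator. No gaps; only note that for part (3) the base relations with $n=m=1$ already suffice, as you effectively use.
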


\begin{proof} The defining relations of $\O_q^+(\GL_2)$ can be written explicitly as:
	\begin{align*} 
	da &= ad &\qquad  db &= q^2 bd & \qquad dc & =q^{-2}cd \\
	cb &=  bc + (1 - q^{-2})(ad  - d^2)  &\qquad ba &= ab + (1- q^{-2}) b d  &\qquad ca &= ac + (q^{-2} - 1) dc 
	\end{align*}
From this description, the centrality of the quantum determinant and the quantum trace are straightforward verifications. The first three commutation relations are immediate; the remainder follow by induction. \end{proof}

Note that the defining relations imply that the element $d$ satisfies the Ore localization condition.

\begin{definition} Define $\O_q(\GL_2)$ as the localization of $\O_q^+(\GL_2)$ at the central element $\det_q$.  Define $\O_q(BB^-)$ to be the localization $\OqGL[d\inv]$ of $\OqGL$ at the element $d$.\end{definition}

\begin{rmk} The algebra $\O_q(\GL_2)$ is known as the reflection equation algebra, and the equation \ref{eq:refleq} is known as the reflection equation.  The matrix $R$ is the usual $R$-matrix on $\C^2$. Note that, when $q=1$, we obtain the classical commutative algebras. The algebra $\O_q(BB^-)$ is a quantization of the algebra of functions on the big Bruhat cell $BB^-$ of $\GL_2$, and is in fact isomorphic to the quantum group $\U_q(\gl_2)$, upon further adjoining a square root of $d$ and of $\detq$. See \cite{KlimykQuantumGroupsTheir1997} for further details.\end{rmk}


\subsection{The algebra $\Dq(\GL_2)$}
\begin{definition} For $q \in \C^\times$, define an algebra $\Dq^+(\GL_2)$ as generated by eight elements, $x_{11}, x_{12},$ $x_{21},$ $x_{22}$, $\partial_{11}$, $\partial_{12}$, $\partial_{21}$, and $\partial_{22}$ , organized in two by two matrices $$X = \begin{bmatrix} x_{11} & x_{12} \\ x_{21}&  x_{22}  \end{bmatrix}, \qquad D=  \begin{bmatrix}  \partial_{11} & \partial_{12}\\ \partial_{21} & \partial_{22}  \end{bmatrix} ,$$ with  relations given by the entries of the matrix equations $$R_{21}X_1RX_2 = X_2R_{21}X_1R, \qquad R_{21}\partial_1R\partial_2 = \partial_2R_{21}\partial_1R, \qquad R_{21}\partial_1RX_2 = X_2R_{21}\partial_1(R_{21})\inv.$$
\end{definition}

\begin{lemma}\label{lem:dqgl2det} The elements $\detq(X) := x_{11} x_{22} - q^2 x_{12}x_{21}$ and $\detq(D) := \partial_{11} \partial_{22} - q^2 \partial_{12} \partial_{21}$ satisfy:\begin{align*}
			\detq(X) x_{ij} =&x_{ij} \detq(X), \qquad & \partial_{ij} \detq(X) =& q^{-2} \detq(X) \partial_{ij}\\
			\detq(D) x_{ij} =&q^{-2} x_{ij} \detq(D), \qquad &  \partial_{ij} \detq(D) =& \detq(D)\partial_{ij} 
		\end{align*}	
	\end{lemma}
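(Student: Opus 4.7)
The four identities split cleanly into two groups. The first ($\detq(X)$ central with respect to the $x_{ij}$'s) and the fourth ($\detq(D)$ central with respect to the $\partial_{ij}$'s) follow immediately from Lemma \ref{lem:oqgl2relns}(1). Indeed, the matrix relation $R_{21}X_1 R X_2 = X_2 R_{21} X_1 R$, together with its counterpart for $D$, exhibits the subalgebras of $\D_q^+(\GL_2)$ generated by the entries of $X$ and of $D$ as homomorphic images of $\O_q^+(\GL_2)$; under these maps, $\detq(X)$ and $\detq(D)$ are the images of the central quantum determinant of $\O_q^+(\GL_2)$, so they commute with the $x_{ij}$'s and the $\partial_{ij}$'s respectively.

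For the two cross-identities, my plan is to unpack the Heisenberg cross-relation $R_{21}\partial_1 R X_2 = X_2 R_{21} \partial_1 R_{21}^{-1}$ entrywise, producing sixteen quadratic commutation relations of the schematic form $\partial_{ij} x_{kl} = \sum c^{ij,kl}_{mn,pq}\, x_{mn}\partial_{pq}$ with explicit $q$-coefficients dictated by the entries of $R$ and $R_{21}^{-1}$. With these in hand, compute $\partial_{ij}(x_{11} x_{22} - q^2 x_{12} x_{21})$ directly for each of the four choices $(i,j) \in \{1,2\}^2$ and verify that after the appropriate cancellations the result collapses to $q^{-2}\detq(X)\partial_{ij}$. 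The identity $\detq(D) x_{ij} = q^{-2} x_{ij} \detq(D)$ is handled symmetrically: either rearrange the same matrix relation to move an $x_{ij}$ past a product of two $\partial$'s and expand $(\partial_{11}\partial_{22} - q^2 \partial_{12}\partial_{21}) x_{ij}$, or invoke an antiautomorphism that exchanges the roles of $X$ and $D$.

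The main obstacle is the computational volume of the cross-case: extracting the sixteen mixed relations and then carrying out the fourfold verification is tedious, although each individual step is routine once the cross-relations are tabulated. A cleaner but less elementary conceptual explanation for the scaling factor $q^{-2}$ is that $\detq$ is the image of $X_1 R X_2$ under the $q$-antisymmetrizer projection, and that the rank-two $q$-antisymmetric representation of the underlying $R$-matrix carries the character $q^{-2}$ under the action encoded by the cross-relation. This would bypass most of the case analysis, but I expect the paper to proceed by the direct calculation since the explicit commutation relations are already the basic input of Section \ref{sec:qmm}.
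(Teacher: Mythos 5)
Your proposal is correct and follows essentially the paper's route: the paper gives no written proof of this lemma, treating it (as in the proof of Lemma \ref{lem:dqgl2rels}) as a direct verification from the defining matrix relations, which is exactly your plan, and your reduction of the first and fourth identities to the centrality of $\detq$ in $\O_q^+(\GL_2)$ via the homomorphisms $L \mapsto X$ and $L \mapsto D$ is a valid small economy over brute force. One caution: the antiautomorphism you mention as a shortcut for $\detq(D)x_{ij} = q^{-2}x_{ij}\detq(D)$ is not an obvious symmetry of the cross-relation $R_{21}\partial_1 R X_2 = X_2 R_{21}\partial_1 (R_{21})^{-1}$, in which $R$ and $(R_{21})^{-1}$ enter asymmetrically, so either verify that such a map exists or fall back on the entrywise expansion you already propose as the primary route.
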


\begin{definition} Define $\Dq(\GL_2)$ as  the localization of $\Dq^+(\GL_2)$ at the element $\detq(X)\detq(D)$. \end{definition}

\begin{rmk} There are two embeddings of $\OqGL$ into $\Dq(\GL_2)$, one given by the assignment $ L \mapsto X$, and the other by $L \mapsto D.$ The images of these two maps together generate $\Dq(\GL_2)$, so in some sense the latter is a twisted tensor product of two copies of $\OqGL$. The cross relations are given by the formulas in the following lemma. \end{rmk}
		
\begin{lemma}\label{lem:dqgl2rels} For any $n,m \geq 1$, the following identities hold in $\Dq^+(\GL_2)$:
		$$\partial_{12}^n  x_{12}^m = q^{-2nm} x_{12}^m \partial_{12}^n, \qquad \partial_{22}^n  x_{12}^m = x_{12}^m \partial_{22}^n, \qquad \partial_{21}^n  x_{21}^m = q^{-2nm} x_{21}^m \partial_{21}^n,$$ $$ \partial_{21}^n  x_{22}^m = x_{22}^m \partial_{21}^n, \qquad \partial_{22}^n  x_{22}^m = q^{-2nm} x_{22}^m \partial_{22}^n$$
		\begin{align*}
			\partial_{21}^n  x_{12} =&  x_{12} \partial_{21}^n + q^{-2-2n}(-q^{2n} + 1) x_{22} \partial_{21}^{n-1}\partial_{22}, \qquad &\partial_{21} x_{12}^n  =& x_{12}^n \partial_{21} + q^{-4}(1-q^{2n} ) x_{12}^{n-1} x_{22} \partial_{22}\\
			\partial_{22}^n x_{21} =&  q^{-2n} x_{21} \partial_{22}^n + q^{-4n+2}(1-q^{2n} ) x_{22} \partial_{21} \partial_{22}^{n-1}, \quad & \partial_{22} x_{21}^n =& q^{-2n} x_{21}^n \partial_{22} + q^{-4n +2}(1-q^{2n} ) x_{21}^{n-1} x_{22} \partial_{21}\\
			\partial_{12}^n x_{22} =& q^{-2n} x_{22} \partial_{12}^n + q^{-2n}(1-q^{2n} ) x_{12} \partial_{12}^{n-1} \partial_{22} , \quad & \partial_{12} x_{22}^n =& q^{-2n} x_{22}^n \partial_{12} + q^{-2n}(1-q^{2n} ) x_{12} x_{22}^{n-1} \partial_{22} 
		\end{align*}
		\begin{align*}
			\partial_{12}^n x_{21} =& x_{21} \partial_{12}^n +  q^{-2}(1-q^{2n} ) x_{11} \partial_{12}^{n-1}\partial_{22} + q^{-2n}(q^{4n-2} - q^{2n} - q^{2n-2} + 
			1)x_{12} \partial_{11} \partial_{12}^{n-2} \partial_{22} \\
			&+q^{-2n}(q^{2n+2} - q^{2n} - q^2 + 1) x_{12} \partial_{12}^{n-1} \partial_{21} \\
			&+ q^{-2n-2}(-q^{6n-4} + q^{6n-6} - q^{4n-6} + q^{2n} + q^{2n-4} - 1) x_{12} \partial_{12}^{n-2} \partial_{22}^2\\
			&+(1-q^{-2n} )x_{22} \partial_{11} \partial_{12}^{n-1} + q^{-2n-2}(q^{4n} - q^{4n-2} + q^{2n-2} -1) x_{22} \partial_{12}^{n-1} \partial_{22}\\
			\\
			\partial_{12} x_{21}^n =&x_{21}^n \partial_{12} (q^{-2n} - 1) x_{11} x_{21}^{n-1}\partial_{22} + q^{-4n+2}(q^{4n-2} - q^{2n} - q^{2n-2} + 1) x_{11} x_{21}^{n-2} x_{22} \partial_{21}\\
			&+ q^{-2n}(q^{2n+2} - q^{2n} - q^2 + 1) x_{21}^{n-1} x_{12} \partial_{21} + (q^{-2n} -1)x_{21}^{n-1}x_{22}\partial_{11}\\
			&+ (1 - q^{-2n}) x_{21}^{n-1} x_{22} \partial_{22} 
			+ q^{-4n-2}(-q^{4n-2} + q^{2n} + q^{2n-2} - 1)x_{21}^{n-2} x_{22}^2 \partial_{21}\\
			\\
			\partial_{12}^n x_{11} =&  x_{11} \partial_{12}^n + (q^{-2n} - 1)x_{12} \partial_{11} \partial_{12}^{n-1} + q^{-2n-2}(q^{4n} - q^{4n-2} + q^{2n-2} - 1)x_{12} \partial_{12}^{n-1}\partial_{22}\\
			\partial_{21}^n x_{11} =& q^{-2n} x_{11} \partial_{21}^n + (-q^{-2n} + q^{-4n}) x_{21}\partial_{21}^{n-1}\partial_{22} + q^{-4n}(q^{4n} - q^{4n-2} -  q^2 + 1)x_{22} \partial_{21}^n\\
			\partial_{22}^n x_{11} =&  x_{11} \partial_{22}^n + q^{-2n+2}(1-q^{2n} ) x_{12} \partial_{21} \partial_{22}^{n-1}\\
			\\
			\partial_{11} x_{12}^n =& q^{-2n} x_{12}^n \partial_{11} + q^{-2n-2}(q^{2n+2} - q^{2n} - q^2 + 1)x_{12}^{n}\partial_{22} + q^{-2n-2}(-q^{2n} 
			+ 1)x_{12}^{n-1} x_{22} \partial_{12}\\
			\partial_{11}x_{21}^n =&  x_{21}^n \partial_{11}  + (q^{-2n} - 1)x_{11} x_{21}^{n-1} \partial_{21} + (1- q^{-2n}) x_{21}^{n-1}x_{22}\partial_{21}\\
			\partial_{11} x_{22}^n =&  (1-q^{2n} ) x_{12} x_{22}^{n-1}\partial_{21} + x_{22}^n \partial_{11}
		\end{align*}
\end{lemma}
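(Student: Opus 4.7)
The strategy is a straightforward, if lengthy, computation from the defining matrix equations, combined with induction on the exponents $n$ and $m$.

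First, I would extract the sixteen scalar base cases ($n=m=1$) from the matrix relation $R_{21}\partial_1 R X_2 = X_2 R_{21} \partial_1 (R_{21})^{-1}$.  Writing out $R$, $R_{21}$, and $(R_{21})^{-1}$ as explicit $4\times 4$ matrices and equating entries produces the cross relations $\partial_{ij} x_{kl} = \cdots$ for all choices of indices.  These are precisely the $n=1,m=1$ specializations of the formulas to be proved (together with a few trivial $q$-commutations).  This step is purely mechanical, but it is the input on which everything else rests.

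Next, I would dispose of the five pure $q$-commutation identities at the top of the lemma, namely the ones of the form $\partial_{ij}^n x_{kl}^m = q^{\pm 2nm} x_{kl}^m \partial_{ij}^n$.  These are immediate by induction on $n$ and $m$ once the base case is known, since no correction terms appear.

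Then I would handle the remaining identities individually by induction, using the standard trick: to prove an identity of the form $\partial_{ij}^n x_{kl} = \cdots$, write $\partial_{ij}^n x_{kl} = \partial_{ij}(\partial_{ij}^{n-1} x_{kl})$, apply the inductive hypothesis to the inner term, and then use the previously established base cases ($n=1$) together with the simple $q$-commutations of Step~2 to push $\partial_{ij}$ past each summand.  The identities of the form $\partial_{ij} x_{kl}^n = \cdots$ are proved by a dual induction on $n$ by writing $\partial_{ij} x_{kl}^n = (\partial_{ij} x_{kl}^{n-1}) x_{kl}$.  A convenient ordering is: first the identities of Lemma~\ref{lem:dqgl2rels} that involve only $\partial_{22}$, $\partial_{21}$ on one side and $x_{22}$, $x_{21}$, $x_{12}$ on the other (since these produce few correction terms), then the identities involving $\partial_{11}$ and $\partial_{12}$, which are the ones with the most elaborate right-hand sides.

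The main obstacle is purely combinatorial: the correction terms compound under iteration, so the $q$-polynomial coefficients grow in complexity. In particular, the identities for $\partial_{12}^n x_{21}$ and $\partial_{12} x_{21}^n$, each with seven summands, require the most care.  The inductive step for these produces initially a sum of many terms, each of which must itself be reduced using base cases that generate further $q$-polynomial coefficients; collecting and simplifying these into the closed forms written in the lemma is where the bulk of the effort lies.  There is no conceptual subtlety, however: once one has patience with the bookkeeping, every identity falls out.  As a consistency check, one may verify that each formula reduces correctly at $n=1$ (or $m=1$) to the corresponding base case from Step~1, and that the $q \to 1$ specialization recovers the commutative identity in $\O(\GL_2\times\GL_2)$.
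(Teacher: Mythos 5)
Your proposal is correct and matches the paper's approach: the paper's proof consists entirely of the remark that the identities are ``easily verified by straightforward computations, some of which involve inductive arguments,'' which is exactly the base-case-plus-induction scheme you describe. Your write-up simply makes explicit the bookkeeping the paper leaves to the reader.
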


\begin{proof} All claims are  easily verified by straightforward computations, some of which involve  inductive arguments.  \end{proof}


\subsection{Behavior at a root of unity}

\begin{prop}\label{prop:OqCenter} Suppose $q$ is a primitive $\ell$-th root of unity, for $\ell >1$ odd. 
\begin{enumerate}
	\item The $\ell$-th powers of $b$, $c$, and $d$ are central in $\OqGL$, and there is a unique central element $z$ such that $\text{\rm det}_q^\ell  = zd^\ell - b^\ell c^\ell.$ 
	
	\item The  following elements are central in $\Dq(\GL_2)$:
	$$x_{12}^\ell , \ x_{21}^\ell, \ x_{22}^\ell, \ \partial_{12}^\ell ,\  \partial_{21}^\ell,\  \partial_{22}^\ell,\  \detq(X)^\ell, \ \text{\rm and} \ \detq(D)^\ell.$$
	Moreover, there are uniquely defined central elements $v$ and $w$ in $\Dq(\GL_2)$ such that $$\detq(X)^\ell  = v x_{22}^\ell - x_{12}^\ell x_{21}^\ell, \quad \text{\rm and} \quad \detq(D)^\ell  = w \partial_{22}^\ell - \partial_{12}^\ell \partial_{21}^\ell .$$
\end{enumerate}	
\end{prop}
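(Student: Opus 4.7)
I treat the two parts in turn, with part~(2) reducing largely to part~(1) via the two natural embeddings $\OqGL \hookrightarrow \Dq(\GL_2)$.

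\textbf{Part~(1).} Centrality of $b^\ell, c^\ell, d^\ell$ follows by inspection of Lemma~\ref{lem:oqgl2relns}(2). Setting $n = \ell$ (or $m = \ell$) in each displayed $n$-th power commutation relation, every correction term carries a coefficient such as $q^{2(n-1)} - q^{-2}$, $1 - q^{\pm 2n}$, or $(-q^{4n} + q^{4n-2} - q^{2(n-1)} + 1)/q^4$, all of which vanish at $n = \ell$ since $q^\ell = 1$. This reduces those relations to honest commutation of $b^\ell, c^\ell, d^\ell$ with all four generators $a, b, c, d$.

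For the element $z$: since $\detq$ is central, so is $\detq^\ell$, so the candidate $z := (\detq^\ell + b^\ell c^\ell) d^{-\ell}$, viewed in the Ore localization $\OqGL[d^{-1}]$, is automatically central as a product of commuting central elements. Uniqueness is immediate since $\OqGL$ is a domain in which $d^\ell$ is a non-zero-divisor. The real content of (1) is thus to show $z \in \OqGL$, i.e., that $d^\ell$ divides $\detq^\ell + b^\ell c^\ell$ in $\OqGL$. My approach exploits $ad = \detq + q^2 bc$: since $a$ and $d$ commute and $\detq$ is central, raising to the $\ell$-th power (and using $q^{2\ell} = 1$) gives
\[ a^\ell d^\ell = \sum_{k=0}^\ell \binom{\ell}{k} q^{2k}\, \detq^{\ell-k}\, (bc)^k \qquad\text{and}\qquad (bc)^\ell = \sum_{k=0}^\ell \binom{\ell}{k} (-1)^k\, a^{\ell-k} d^{\ell-k}\, \detq^k. \]
The second identity shows that $(bc)^\ell + \detq^\ell$ lies in $(d)$, but only the $k = 0$ term contributes a full $d^\ell$; the terms with $k = 1, \dots, \ell - 1$ carry only lower powers of $d$. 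The missing cancellation must come from the discrepancy $b^\ell c^\ell - (bc)^\ell$, which I would compute by iterating $cb = q^2 bc + (1 - q^{-2})(\detq - d^2)$, exploiting that $(1-q^{-2})(\detq - d^2)$ commutes with $bc$ to make the iteration tractable. The combinatorial bookkeeping needed to verify that this discrepancy precisely supplies the $d, d^2, \dots, d^{\ell-1}$ corrections is the main obstacle; I would attack it either by induction on $\ell$ or by direct matching against the PBW basis $\{a^i b^j c^k d^m\}$ of $\OqGL$.

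\textbf{Part~(2).} The assignments $L \mapsto X$ and $L \mapsto D$ embed $\OqGL$ into $\Dq(\GL_2)$. Applying part~(1) to each subalgebra yields, within that subalgebra, centrality of $x_{12}^\ell, x_{21}^\ell, x_{22}^\ell, \detq(X)^\ell$ (resp.\ $\partial_{12}^\ell, \partial_{21}^\ell, \partial_{22}^\ell, \detq(D)^\ell$), together with elements $v, w$ satisfying the stated relations. To upgrade centrality to all of $\Dq(\GL_2)$, I would invoke Lemma~\ref{lem:dqgl2det}, whose $q$-commutations between $\detq(X), \detq(D)$ and the $x_{ij}, \partial_{ij}$ collapse upon taking $\ell$-th powers (since $q^{\pm 2\ell} = 1$), and Lemma~\ref{lem:dqgl2rels}, where every correction term in the cross-relations $\partial_{ij}^n x_{kl}^m$ has a coefficient containing $1 - q^{\pm 2\ell}$ or $q^{-2\ell} - 1$, killing it at $n = \ell$ or $m = \ell$. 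Finally, for centrality of $v$ in all of $\Dq(\GL_2)$: multiplying $[\partial_{ij}, v]$ on the right by $x_{22}^\ell$ (central and a non-zero-divisor in the domain $\Dq(\GL_2)$) yields
\[ [\partial_{ij}, v] \cdot x_{22}^\ell = [\partial_{ij},\, v \cdot x_{22}^\ell] = [\partial_{ij},\, \detq(X)^\ell + x_{12}^\ell x_{21}^\ell] = 0, \]
since each summand on the right is now known to be central in $\Dq(\GL_2)$. Cancelling $x_{22}^\ell$ gives $[\partial_{ij}, v] = 0$; a symmetric argument handles $w$.
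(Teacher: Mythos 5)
Your part (1) correctly isolates the crux: once one knows that $d^\ell$ divides $\detq^\ell + b^\ell c^\ell$ in $\OqGL$, the element $z := (\detq^\ell + b^\ell c^\ell)d^{-\ell}$ is automatically central and unique. But that divisibility is precisely what you do not prove. Your two binomial identities (which are valid, since $\detq$ is central and $a,d$ commute) only show that $(bc)^\ell + \detq^\ell$ lies in the ideal generated by $d$, and you then defer the computation of the discrepancy $b^\ell c^\ell - (bc)^\ell$ to ``combinatorial bookkeeping'' that you explicitly flag as the main obstacle. That bookkeeping is the entire content of the existence statement, so the proof is incomplete at its central point. The paper sidesteps the computation with a weight argument: every PBW monomial occurring in $\detq^\ell$ has the form $a^k d^k b^m c^m d^{2p}$ with $k+m+p=\ell$, and commuting the central element $\detq^\ell$ past $b$ rescales the coefficient of such a monomial by $q^{2(k+2p)}$ (up to terms supported on other monomials), so any monomial with nonzero coefficient must satisfy $\ell \mid k+2p$. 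Together with $k+m+p=\ell$ this forces either $(k,m,p)=(0,\ell,0)$ -- the monomial $b^\ell c^\ell$, whose coefficient is $(-q^2)^\ell=-1$ -- or $d$-degree $k+2p\geq \ell$, whence $\detq^\ell + b^\ell c^\ell \in (d^\ell)$ immediately. If you want to keep your route, you would have to actually carry out the induction on the reordering of $(bc)^\ell$, which is far heavier than this one-line weight count.

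Part (2) is essentially the paper's argument: centrality of the listed $\ell$-th powers by inspection of Lemmas \ref{lem:dqgl2det} and \ref{lem:dqgl2rels} (all correction coefficients contain a factor vanishing at $q^{2\ell}=1$), and $v,w$ defined as the images of $z$ under the embeddings $L\mapsto X$, $L\mapsto D$. Your cancellation trick $[\partial_{ij},v]\,x_{22}^\ell = [\partial_{ij}, \detq(X)^\ell + x_{12}^\ell x_{21}^\ell]=0$ is a sensible way to make precise what the paper only asserts (``the fact $v$ is central follows easily''), but note that both this step and the claimed uniqueness of $v,w$ silently use that $x_{22}^\ell$ (resp.\ $\partial_{22}^\ell$, resp.\ $d^\ell$ in part (1)) is a non-zero-divisor; this should be justified, e.g.\ by a PBW basis for $\Dq^+(\GL_2)$ and the Ore localizations, in one sentence.
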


\begin{proof} The identities listed in Lemma \ref{lem:oqgl2relns} imply that $b^\ell$, $c^\ell$, and $d^\ell$ are central in $\OqGL$. Next, observe that all monomials that appear in $\detq^\ell$ are of the form $a^k d^k b^m c^m d^{2p}$ for $k+m+p = \ell$. Let $f_{k,m,p}$ be the coefficient of this monomial. Since $\detq$ is central, we have the identity $\detq^\ell b = b \detq^\ell$. The left hand side expands as:
$$\sum f_{k,m,p} a^k d^k b^m c^m d^{2p} b = b\left(\sum f_{k,m,p} q^{2(k+2p)} a^k d^k b^m c^m d^{2p} + \dots\right)$$
where all terms that appear in `\dots' are of the form $a^{k'} d^{k'} b^{m'} c^{m'} d^{2p'}$ where the triple $(k',m',p')$ is not equal to the triple $(k,m,p)$. The justification of this expression follows from the commutation relations that $b$ has with the other generators (see Lemma \ref{lem:oqgl2relns} above). Comparing with the right-hand side of $b \detq^\ell$, we see that $q^{2(k+2p)}$ must be equal to 1. In other words, $\ell$ divides $k+2p$. It follows that $d^\ell$ divides  $\detq^\ell + b^\ell c^\ell$, and hence $\detq^\ell = zd^\ell - b^\ell c^\ell$. The fact that $\detq$, $d^\ell$, $b^\ell$ and $c^\ell$ are all central implies that $z$ is as well.
	
 The claim that $x_{12}^\ell$, $x_{21}^\ell$, $x_{22}^\ell$, $\partial_{12}^\ell$,  $\partial_{21}^\ell$, $\partial_{22}^\ell$, $\detq(X)^\ell$, and $\detq(D)^\ell$ are central in  $\Dq(\GL_2)$ follows by direct inspection of the identities in Lemmas \ref{lem:dqgl2det} and \ref{lem:dqgl2rels}, noting also that the commutation relations among the $x$'s and the $\partial$'s are each the same as the commutation relations for $\OqGL$ (given in Lemma \ref{lem:oqgl2relns}). Let $v$ and $w$ be the images of $z \in \OqGL$ under the embeddings $L \mapsto X$ and $L \mapsto D$, respectively. The fact  $v$ is central follows easily.   \end{proof} 

\begin{definition} We define the following algebras:
	\begin{enumerate}
		\item Let $\OqGLell$ be the subalgebra of $\OqGL$ generated by the elements  $z, b^\ell, c^\ell$, and $d^\ell$. 
		\item Let  $\Dq^{(\ell)}(\GL_2)$ be the subalgebra of $\Dq(\GL_2)$ generated by the elements  $v$, 	$x_{12}^\ell$, $x_{21}^\ell$, $x_{22}^\ell$, $w$, $\partial_{12}^\ell$, $\partial_{21}^\ell$, and  $\partial_{22}^\ell$.  
	\end{enumerate}\end{definition}

\begin{cor} \label{cor:GL2ell} 	We have:
	\begin{enumerate}
		\item 	The subalgebra $\OqGLell$ is central in $\OqGL$, and is isomorphic to the coordinate algebra $\O(\GL_2)$. Similarly, the subalgebra $\OqGLell[(d^\ell)\inv]$ is central in $\O_q(B B^-)$, and is isomorphic to the coordinate algebra $\O(BB^-)$ of the big Bruhat cell $B B^-$ in $\GL_2$.  
		\item The subalgebra $\Dq^{(\ell)}(\GL_2)$ is central in $\Dq(\GL_2)$, and is isomorphic to the coordinate algebra $\O(\GL_2 \times \GL_2)$. 
	\end{enumerate}
\end{cor}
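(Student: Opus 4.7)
The plan is to deduce Corollary \ref{cor:GL2ell} from Proposition \ref{prop:OqCenter} by constructing explicit maps from the classical coordinate algebras to the candidate central subalgebras and verifying injectivity via a leading-term analysis in a PBW basis.

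For part (1), I would define a homomorphism $\varphi\colon \O(\GL_2) \to \OqGL$ from the classical presentation $\O(\GL_2) = \C[A,B,C,D][(AD-BC)^{-1}]$ by $A \mapsto z$, $B \mapsto b^\ell$, $C \mapsto c^\ell$, $D \mapsto d^\ell$. Well-definedness rests on pairwise commutativity of the four targets (Proposition \ref{prop:OqCenter}(1)) and on invertibility of the image of $AD-BC$, which equals $\det_q^\ell$ and is invertible in $\OqGL$ by construction. Surjectivity onto $\OqGLell$ is immediate from the definition, and since the targets are central the image automatically lies in $Z(\OqGL)$.

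The essential step is injectivity. I would fix the PBW basis $\{a^i b^j c^k d^m\}$ of $\OqGL^+$ (in the ordering $a<b<c<d$), which follows by a Bergman-diamond argument from Lemma \ref{lem:oqgl2relns}, and filter by $a$-degree. Since $a$ and $d$ commute, the only contribution of $a$-degree $\ell$ to $\det_q^\ell = (ad - q^2 bc)^\ell$ is the term $(ad)^\ell = a^\ell d^\ell$, so the identity $zd^\ell = \det_q^\ell + b^\ell c^\ell$ forces $z = a^\ell + (\text{terms of } a\text{-degree} < \ell)$. The four targets $z, b^\ell, c^\ell, d^\ell$ being central, together with the fact that all ``potential'' correction terms when commuting $\ell$-th powers past the generators carry factors $q^{2n\ell}-1 = 0$, means that multiplying any PBW monomial $a^p b^q c^r d^s$ on the right by $b^{\ell j} c^{\ell k} d^{\ell m}$ yields $a^p b^{q+\ell j} c^{r+\ell k} d^{s+\ell m}$ with no reordering corrections. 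Consequently the leading $a$-degree component of $z^i (b^\ell)^j (c^\ell)^k (d^\ell)^m$ is the single PBW monomial $a^{\ell i} b^{\ell j} c^{\ell k} d^{\ell m}$; as $(i,j,k,m)$ varies these are pairwise distinct, hence linearly independent, and $\varphi$ is injective on the polynomial subring. The isomorphism with $\O(\GL_2)$ then follows by inverting $\det_q$ on both sides, and the $\O_q(BB^-)$ statement by further inverting $d^\ell$.

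Part (2) follows exactly the same template: construct $\Phi\colon \O(\GL_2 \times \GL_2) \to \Dq(\GL_2)$ sending the two classical matrices' entries to $(v, x_{12}^\ell, x_{21}^\ell, x_{22}^\ell)$ and $(w, \partial_{12}^\ell, \partial_{21}^\ell, \partial_{22}^\ell)$ respectively; well-definedness uses Proposition \ref{prop:OqCenter}(2) and the two displayed identities for $\detq(X)^\ell$ and $\detq(D)^\ell$. Injectivity rests on a PBW basis for $\Dq(\GL_2)$ (available from its structure as a twisted tensor product of two copies of $\OqGL$) and the leading-term identifications $v = x_{11}^\ell + (\text{lower})$ and $w = \partial_{11}^\ell + (\text{lower})$, obtained exactly as for $z$. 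The main obstacle I anticipate is the bookkeeping required to verify that all the correction terms arising in Lemma \ref{lem:dqgl2rels} when commuting $\ell$-th powers past each other indeed vanish at $q$ a primitive $\ell$-th root of unity: the formulas are long, but every correction coefficient is a polynomial in $q$ of the form $q^{2n}-1$ or similar with $\ell \mid n$, so the vanishing is uniform. Once this is confirmed, the filtration becomes multiplicative on the relevant commutative subalgebra and the leading-term injectivity argument of part (1) transfers verbatim.
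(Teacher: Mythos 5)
Your overall strategy (build the map $\O(\GL_2)\to\OqGLell$ from Proposition \ref{prop:OqCenter} and prove injectivity by a leading-term argument in a PBW basis) is reasonable, and more detailed than the paper, which essentially deduces the corollary directly from Proposition \ref{prop:OqCenter} (citing Varagnolo--Vasserot for the general statement). But the key injectivity step has a genuine gap: your leading-term analysis implicitly assumes that the filtration by $a$-degree is an algebra filtration, and it is not. The relation $cb = bc + (1-q^{-2})(ad-d^2)$ \emph{raises} $a$-degree, so from $z = a^\ell + (\text{lower }a\text{-degree})$ you cannot conclude, as written, that $z^i = a^{\ell i} + (\text{lower }a\text{-degree})$: products of the lower-order monomials of $z$ can create new $a$'s when reordered into the PBW basis. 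The only justification you offer for the multiplicativity --- that correction terms carry factors $q^{2n\ell}-1=0$ --- addresses commuting the \emph{central} elements $b^\ell, c^\ell, d^\ell$ past monomials (which indeed produces no corrections), but says nothing about the self-products $z^i$, which is where the danger lies. The statement is true and repairable, but it needs an extra argument: for instance, note that every reordering that creates an $a$ consumes one $b$ and one $c$, that $z$ is homogeneous of total degree $\ell$ and of weight zero for the grading $\deg b = +1$, $\deg c = -1$, $\deg a=\deg d =0$ (so each PBW monomial of $z$ has equally many $b$'s and $c$'s), and hence each non-leading monomial of $z$ satisfies $(\#a)+(\#b)\le \ell-1$; a letter-counting bound then shows that any PBW term of a product of $i$ monomials of $z$ has $a$-degree $<\ell i$ unless all factors equal $a^\ell$. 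Some such counting (or a genuinely multiplicative filtration) must be supplied; the same care is needed even for your claim that the $a$-degree-$\ell$ part of $\detq^\ell$ is $a^\ell d^\ell$, since ``$a$ and $d$ commute'' is not the reason --- words in $bc$ also generate $ad$-terms, and one must check they cannot reach $a$-degree $\ell$.

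For part (2) the same gap recurs in a sharper form, and ``transfers verbatim'' is too quick. The obstacle you anticipate (vanishing of $q^{2n}-1$-type coefficients when commuting $\ell$-th powers) is not the issue at all, since $v$, $w$ and the listed $\ell$-th powers are already central by Proposition \ref{prop:OqCenter}(2), so they commute exactly with everything. The issue is again the leading-term bookkeeping: the cross relations of Lemma \ref{lem:dqgl2rels} raise the $x_{11}$- and $\partial_{11}$-degrees (e.g.\ $\partial_{12}^n x_{21}$ produces $x_{11}\partial_{12}^{n-1}\partial_{22}$ terms), so when you reorder products of the lower-order monomials of $v$ and $w$ into a PBW basis of $\Dq(\GL_2)$ you must again control the creation of $x_{11}$'s and $\partial_{11}$'s by a counting or filtration argument set up for the eight-generator algebra, not just quote part (1). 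You also lean on a PBW basis for $\Dq^+(\GL_2)$ (``available from its structure as a twisted tensor product''), which the paper only asserts informally; this should be either proved (diamond lemma with the cross relations) or cited precisely. Finally, a minor point you handle only implicitly: the subalgebra generated by $z,b^\ell,c^\ell,d^\ell$ alone is a polynomial ring (an $\O(\Mat_2)$), so to literally obtain $\O(\GL_2)$ one must adjoin $(\detq^\ell)\inv = (zd^\ell - b^\ell c^\ell)\inv$, as your final localization step does; it is worth saying this explicitly when matching your map with the subalgebra $\OqGLell$ as defined in the paper.
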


\begin{rmk} We note that the element $a^\ell$ in $\OqGL$ is not central, and hence neither are the elements $x_{11}^\ell$ and $\partial_{11}^\ell$ in $\Dq(\GL_2)$. The leading term of $z$ is $a^\ell$, and it is the unique central element with that property, so it can be seen as a correction to the naive powering operation being incompatible with the quantum Frobenius homomorphism of Lusztig.  \end{rmk}

\begin{example} In the case $\ell = 3$, direct computation gives that $z= a^3 + 3abc + 3qbcd$. For $\ell = 5$, one finds similarly, $$z = a^5 + 5a^3bc + 5(2q^3 - q)a^2bcd + 5ab^2c^2 + 5(2q^3  + 3q^2 + 4q + 2)abcd^2 + 5q^3b^2c^2d - 5(q^3+ 2q^2+ q)bcd^3.$$
It would be interesting to compute the general form of $z$. 
\end{example}


\subsection{A quantum moment map for $\Dq(\C^2)$} We now connect the reflection equation algebra to the algebra $\DqN$ of Section \ref{sec:dqn} in the case $N=2$. 

As explained in Section \ref{sec:dqn}, the element $\beta_2 = 1 + x_1 \partial_1 + x_2 \partial_2 \in \Dq(\C^2)$ is the total grading operator, and let $\Dq(\C^2)^{\det}$ denote the localization $\Dq(\C^2)[\beta_2\inv]$ of $\Dq(\C^2)$ at this element.  

\begin{prop}\label{prop:dq2mm} There is a well-defined algebra homomorphism $$\mu_q: \OqGL \rightarrow \Dq(\C^2)^{\det}$$ 
	\begin{align*} a &\mapsto 1+ \partial_2 x_2 &\qquad  b &\mapsto  \partial_2 x_1 \\
	c &\mapsto   \partial_1 x_2 &\qquad d &\mapsto 1+ x_1 \partial_1. \end{align*}
	Moreover, $\mu_q$ takes the quantum determinant $\detq $ to the total grading operator $\beta_2$. \end{prop}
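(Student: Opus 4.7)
The plan is to prove the proposition in two stages. First, I would verify that the assignments define a well-defined homomorphism $\mu_q \colon \O_q^+(\GL_2) \to \Dq(\C^2)$ by checking that the six defining relations of $\O_q^+(\GL_2)$ (listed in Lemma \ref{lem:oqgl2relns}) hold for the images of $a, b, c, d$. Second, I would compute $\mu_q(\detq)$ and identify it with $\beta_2$; since $\beta_2$ is invertible in $\Dq(\C^2)^{\det}$ by construction, the universal property of localization then extends $\mu_q$ to $\OqGL = \O_q^+(\GL_2)[\detq\inv] \to \Dq(\C^2)^{\det}$.

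The key device making the relation checks tractable is to rewrite the images in terms of the commuting Euler operators $\beta_1, \beta_2$ of Lemma \ref{lem:betacomm}. From $\partial_2 x_2 = q^2 x_2 \partial_2 + (q^2-1)\beta_1$ together with $x_2 \partial_2 = \beta_2 - \beta_1$, the images become $A := \mu_q(a) = 1 + q^2 \beta_2 - \beta_1$ and $D := \mu_q(d) = \beta_1$; both lie in the commutative subalgebra $\C[\beta_1, \beta_2]$ and hence commute, yielding $da = ad$ immediately. Using $\partial_1 x_2 = q x_2 \partial_1$ and $x_1 x_2 = q\inv x_2 x_1$, the product $\mu_q(bc) = \partial_2 x_1 \partial_1 x_2$ reorganizes as $\partial_2 x_2 \cdot x_1 \partial_1 = (A-1)(D-1)$, which also lies in $\C[\beta_1, \beta_2]$. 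The relations $db = q^2 bd$ and $dc = q^{-2}cd$ then follow directly from the $q^{\pm 2}$-commutation of $\beta_1$ with $x_1, \partial_1$ in Lemma \ref{lem:betacomm}, while the three cross-relations $cb - bc = (1-q^{-2})(ad-d^2)$, $ba - ab = (1-q^{-2})bd$, and $ca - ac = (q^{-2}-1)dc$ each reduce to polynomial identities in $\beta_1, \beta_2$ after normal-ordering both sides.

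For the determinant computation, I would expand $\mu_q(\detq) = AD - q^2(A-1)(D-1) = (1-q^2)AD + q^2(A+D-1)$; the identity $A + D - 1 = q^2 \beta_2$ together with the closed form of $AD$ in $\beta_1, \beta_2$ should then produce $\beta_2$ after cancellation. The main obstacle I anticipate is the bookkeeping in the three cross-relations, especially $cb - bc = (1-q^{-2})(ad - d^2)$, which on both sides expands into many $q$-factor-laden monomials; careful application of the relations of Lemma \ref{lem:betacomm} and of the defining relation $\partial_i x_i = q^2 x_i \partial_i + (q^2 - 1)\beta_{i-1}$ after each rewriting step is essential to avoid sign errors and to confirm that the stray terms in $\beta_1$ and $\beta_1^2$ do indeed cancel, leaving a clean identity in $\beta_1, \beta_2$.
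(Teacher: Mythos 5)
Your overall route coincides with the paper's (the proof there is exactly ``verify the reflection equation relations directly'', together with the implicit extension to the localization $\OqGL=\O_q^+(\GL_2)[\detq\inv]$ once $\mu_q(\detq)$ is invertible), and several of your reductions are correct and useful: indeed $\mu_q(d)=\beta_1$, $\mu_q(a)=1+q^2\beta_2-\beta_1$, $\mu_q(b)\mu_q(c)=\partial_2x_2\,x_1\partial_1=(\mu_q(a)-1)(\mu_q(d)-1)$, and the relations $da=ad$, $db=q^2bd$, $dc=q^{-2}cd$ do follow from Lemma \ref{lem:betacomm} exactly as you say.

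The gap is that the decisive checks are only asserted, and as you have set them up they do not close. Write $A,B,C,D$ for the images. A short normal-ordering computation gives $BA-AB=(q^2-1)BD=(1-q^{-2})DB$, whereas the relation from Lemma \ref{lem:oqgl2relns} you are checking is $ba-ab=(1-q^{-2})bd$; these differ by a factor $q^2$ precisely because $db=q^2bd$. Likewise one finds $CA-AC=(q^{-2}-1)CD$ rather than $(q^{-2}-1)DC$, and $CB-BC=(q^2-1)(AD-q^2D^2)$ rather than $(1-q^{-2})(AD-D^2)$. Your own determinant expansion already exposes the problem: $(1-q^2)AD+q^2(A+D-1)=(1-q^2)AD+q^4\beta_2$, and since $AD=\beta_1(1+q^2\beta_2-\beta_1)$ is not a scalar multiple of $\beta_2$, no cancellation produces $\beta_2$; in fact one gets the clean identity $AD-BC=q^2\beta_2$, so $AD-q^2BC\neq\beta_2$. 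In other words, the images satisfy the mirror-image relations (with $db$, $cd$ in place of $bd$, $dc$, and with $q^{-2}(ad-bc)$ mapping to $\beta_2$), not the presentation recorded in Lemma \ref{lem:oqgl2relns} as derived from the displayed $R$, $R_{21}$. Before the ``direct verification'' can succeed you must diagnose and resolve this mismatch of conventions (adjusting either the presentation and the normalization of $\detq$, or the assignment of generators), and that is exactly the content your proposal waves through with ``follow directly'' and ``should then produce $\beta_2$ after cancellation''. A smaller imprecision: the $ba$ and $ca$ checks are not identities in $\C[\beta_1,\beta_2]$, since $B$ and $C$ do not lie in that subalgebra; they amount to comparing scalar multiples of $B\beta_1$ and $C\beta_1$.
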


\begin{proof} The reflection equation algebra relations are easily verified directly. Alternatively, one can appeal to \cite[Definition-Proposition 7.11]{JordanQuantizedmultiplicativequiver2014}. \end{proof}

Let $Z= Z_2$ denote the center of $\Dq(\C^2)$. Recall from Proposition \ref{prop:betaeqns} that $\beta_2^\ell = 1+ x_1^\ell \partial_1^\ell + x_2^\ell \partial_2^\ell$ belongs to $Z$, and set $Z^{\det}$ to be the localization $Z[ (\beta_2^\ell)\inv]$. Thus,  $Z^{\det}$ is the algebra of functions on the subvariety $$ (T^*\C^2)^{\det} = \{(a,b, \xi, \zeta) \in \C^2 \times \C^2 \simeq \C^2 \times (\C^2)^* \simeq T^*\C^2 \ : \ 1 + a\xi + b \zeta \neq 0\} $$ of $T^*\C^2$. As is well-known, the action of $\GL_2$ on $\C^2$ extends to a Hamiltonian action of $\GL_2$ on $T^*\C^2$ which admits a group-valued moment map $$\tilde \mu : (T^*\C^2)^{\det} \rightarrow \GL_2 \ \qquad \qquad \tilde \mu(a,b, \xi, \zeta) =  \begin{bmatrix}  1 + a\xi  & a\zeta \\  b\xi & 1 + b\zeta  \end{bmatrix}$$
We obtain a pullback homomorphism: $\tilde \mu^\# : \O(\GL_2) \rightarrow \O((T^*\C^2)^{\text{\rm det}}).$

\begin{theorem}\label{thm:dq2mmcenter} Suppose $q$ is a primitive $\ell$-th root of unity, for $\ell >1$ odd.
\begin{enumerate}
\item\label{thm:dq2mmcenter:muq}  Then $\mu_q$ restricts to an algebra homomorphism $\mu_q^{(\ell)} : \OqGLell \rightarrow Z^{\det}$ given by:
	\begin{align*} 
	z & \mapsto 1+ x_2^\ell \partial_2^\ell &   b^\ell & \mapsto x_1^\ell \partial_2^\ell \\
	c^\ell & \mapsto x_2^\ell \partial_1^\ell &  d^\ell & \mapsto 1+ x_1^\ell \partial_1^\ell \end{align*}
	Moreover, $\detq^\ell$ maps to the element $1+ x_1^\ell \partial_1^\ell + x_2^\ell \partial_2^\ell$. 

\item Under the identification of $\OqGLell$ with $\O(\GL_2)$ and $Z^{\det}$ with $\O((T^* \C^2)^{\det})$, the map $\mu_q^{(\ell)}$ matches with $\tilde \mu^\#$. Thus, the following diagram commutes: \[ \xymatrix{  \O_q(\GL_2 ) \ar[r]^{\mu_q} & \Dq(\C^2)^{\det} \\
\O(\GL_2 ) \ar@{^{(}->}[u] \ar[r]^{\tilde \mu^\#  \ \ } & \O( (T^* \C^2 )^{\text{\rm det}}) \ar@{^{(}->}[u] }. \]

\item The  Azumaya locus $(T^*\C^2)^\circ$ is the preimage of the big Bruhat cell under the group-valued moment map $\tilde \mu$. Thus, the following diagram commutes:
$$ \xymatrix{ \O_q(BB^-) \ar[r]^{\mu_q}  & \D_q(\C^2)^\circ  \\ \O(BB)  \ar[u] \ar[r]^{\tilde \mu^\#} & \O(T^* \C^2)^\circ \ar[u]   } $$
\end{enumerate}
\end{theorem}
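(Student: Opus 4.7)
The plan is to treat the three claims in order, with the bulk of the work being the computation of $\mu_q$ on the four $\ell$-th power generators of $\OqGLell$.

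For (1), I begin from Proposition \ref{prop:dq2mm}, which gives $\mu_q(d) = \beta_1$ and $\mu_q(\detq) = \beta_2$; Proposition \ref{prop:betaeqns} then immediately yields $\mu_q(d^\ell) = 1 + x_1^\ell \partial_1^\ell$ and $\mu_q(\detq^\ell) = 1 + x_1^\ell \partial_1^\ell + x_2^\ell \partial_2^\ell$. For $b^\ell$ and $c^\ell$, the $q$-commutation $\partial_j x_i = q x_i \partial_j$ ($i\neq j$) gives by induction $(\partial_2 x_1)^n = q^{n(n+1)/2} x_1^n \partial_2^n$; at $n = \ell$ odd with $q^\ell = 1$ the prefactor is $1$, so $\mu_q(b^\ell) = x_1^\ell \partial_2^\ell$ and analogously $\mu_q(c^\ell) = x_2^\ell \partial_1^\ell$. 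The image of $z$ is then forced by applying $\mu_q$ to the defining equation $\detq^\ell = zd^\ell - b^\ell c^\ell$: the right-hand side rewrites (using that all $\ell$-th powers are central and hence commute freely) as $(1 + x_1^\ell \partial_1^\ell)(1 + x_2^\ell \partial_2^\ell)$, and cancelling the non-zero-divisor $1 + x_1^\ell \partial_1^\ell$ (valid since $Z_2$ is a polynomial ring by Proposition \ref{prop:center}) yields $\mu_q(z) = 1 + x_2^\ell \partial_2^\ell$. All four images lie in $Z^{\det}$, confirming that $\mu_q$ restricts to $\mu_q^{(\ell)}$ with the claimed formulas.

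For (2), the identifications come from Corollary \ref{cor:GL2ell} and Proposition \ref{prop:center}: $\OqGLell \cong \O(\GL_2)$ sends the four central generators to the four matrix-entry functions (in the convention that makes the square commute) and $Z^{\det} \cong \O((T^*\C^2)^{\det})$ sends the $\ell$-th powers of $x_i$ and $\partial_i$ to the base and fibre coordinates. Under these identifications, each of the four formulas from (1) matches term-by-term with the pullback of the corresponding matrix entry under $\tilde\mu^\#$, so the diagram commutes on generators, and hence on all of $\O(\GL_2)$. For (3), a direct calculation gives $\det\tilde\mu(a,b,\xi,\zeta) = (1+a\xi)(1+b\zeta) - (a\zeta)(b\xi) = 1 + a\xi + b\zeta$; since $BB^-$ is the locus in $\GL_2$ where the $(2,2)$-entry is nonzero, the preimage $\tilde\mu^{-1}(BB^-)$ is cut out by the simultaneous non-vanishing of $1 + b\zeta$ and $1 + a\xi + b\zeta$. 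Under the identifications of (2) these pull back precisely to $\beta_1^\ell$ and $\beta_2^\ell$, so $\tilde\mu^{-1}(BB^-) = \Spec Z_2^\circ$, which is the Azumaya locus by Theorem \ref{thm:DqNAzumaya}. Finally, since $\mu_q(d) = \beta_1$ is invertible in $\Dq(\C^2)^\circ$, $\mu_q$ extends to $\O_q(BB^-) = \OqGL[d^{-1}] \to \Dq(\C^2)^\circ$, and commutativity of the second square is inherited from (2) by naturality of localization.

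The only real subtlety is the bookkeeping in (2): the formula $\mu_q^{(\ell)}(z) = 1 + x_2^\ell \partial_2^\ell$ involves the index $2$ while $\mu_q^{(\ell)}(d^\ell) = 1 + x_1^\ell \partial_1^\ell$ involves the index $1$, and aligning this asymmetry with the labelling $(a, b, \xi, \zeta)$ in the statement of $\tilde\mu$ (which may entail pairing $b^\ell$ with the $(2,1)$-entry rather than the $(1,2)$-entry of the matrix) requires care, but poses no conceptual difficulty.
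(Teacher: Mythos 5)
Your proposal is correct and follows essentially the same route as the paper's proof: $d^\ell$ and $\detq^\ell$ via Proposition \ref{prop:betaeqns}, $b^\ell$ and $c^\ell$ via the $q$-commutation of $x_1,\partial_2$ (resp.\ $x_2,\partial_1$), the image of $z$ forced by $\detq^\ell = z d^\ell - b^\ell c^\ell$, and part (3) by identifying the preimage of $BB^-$ with the locus where $\beta_1^\ell$ and $\beta_2^\ell$ are invertible, i.e.\ $\Spec(Z_2^\circ)$, and citing Theorem \ref{thm:DqNAzumaya}. One small imprecision: the cancellation of $1+x_1^\ell\partial_1^\ell$ happens in $\Dq(\C^2)^{\det}$, not in $Z_2$ (since $\mu_q(z)$ is not a priori central), so the correct justification is that $\Dq(\C^2)^{\det}$ is a domain (equivalently, free over its central localized polynomial ring), which is immediate from the PBW basis.
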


\begin{proof} To prove the first claim, first note that the fact that $x_1$ and $\partial_2$ commute up to a power of $q$ implies that $b^\ell$ maps to $x_1^\ell \partial_2^\ell$. Similarly for $c^\ell$.  Proposition \ref{prop:betaeqns} implies that $d^\ell$ maps to $1+ x_1^\ell \partial_1^\ell $. The same proposition also implies that $\beta_2^\ell = 1+ x_1^\ell \partial_1^\ell + x_2^\ell \partial_2^\ell$. From this, and the relation $\detq^\ell = zd^\ell - b^\ell c^\ell$, one verifies the claim for the image of $z$. The second assertion is immediate from definitions. For the last statement, observe that, upon localization of $d$ and $\mu_q(d) = 1 + x_1 \partial_1$, the map $\mu_q$ descends to an algebra homomorphism
$$\OqGL[d\inv] \rightarrow D_q(2)^\circ = \D_q(\C^2)[(1 + x_1 \partial_1 + x_2 \partial_2)\inv, (1+ x_2 \partial_2)\inv].$$
This map restricts to  $$\O(BB^-) =  \OqGLell[(d^\ell)\inv] \rightarrow Z^\circ$$ The source of the above map can be identified with the algebra of functions on the big Bruhat cell $B  B^-$, while the target is precisely the Azumaya locus of $D_q(2)$, namely $(T^*\C^2)^\circ$. \end{proof}

\begin{rmk} The map $\mu_q$ is in fact a `quantum moment map' because it satisfies a certain equivariance condition for the actions of the quantum group for $\gl_2$. We do not describe these actions explicitly here. See \cite{JordanQuantizedmultiplicativequiver2014, GanevquantumFrobeniuscharacter2019} for more details. \end{rmk}


\subsection{A quantum moment map for $\Dq(\GL_2)$} In this last section we state without proof an explicit reformulation of results due to Varagnolo and Vasserot \cite{VaragnoloDoubleaffineHecke2010} for $\Dq(\GL_N)$, using the discussion in the preceding section.  The group-commutator form of the quantum moment map is from \cite{JordanQuantizedmultiplicativequiver2014}; we refer therein for a discussion of the comparison to the moment map from \cite{VaragnoloDoubleaffineHecke2010}.   These constructions are generalized to arbitrary groups $G$, and to arbitrary multiplicative quiver varieties in \cite{GanevquantumFrobeniuscharacter2019}. 

Let $\GL_2$ act on its `double' $D(G) =\GL_2 \times \GL_2$ by diagonal conjugation: $g (A,B) = (gAg\inv, gBg\inv)$. As is well-known \cite{AlekseevLiegroupvalued1998}, there is a group-valued moment map $$\tilde \phi : \GL_2 \times \GL_2 \rightarrow \GL_2 \ \qquad \qquad \tilde \phi(A,B) =  AB\inv A\inv B.$$
We obtain a pullback homomorphism: $\tilde \phi^\# : \O(\GL_2) \rightarrow \O(\GL_2 \times \GL_2).$ A quantum version of this map is given in the following proposition, which holds for any $q \in \C^\times$. 

\begin{prop}\label{prop:dqgl2mm}(\cite[Definition-Proposition 7.20]{JordanQuantizedmultiplicativequiver2014}, \cite[Proposition 1.8.3]{VaragnoloDoubleaffineHecke2010}) There is a quantum moment map $\phi_q: \OqGL \rightarrow \Dq(\GL_2)$ defined by $$ L \mapsto DX\inv D \inv X.$$ \end{prop}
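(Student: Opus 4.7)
The content of the proposition is that the assignment $L \mapsto D X\inv D\inv X$ respects every defining relation of $\OqGL$; equivalently, writing $M := D X\inv D\inv X$, the entries of $M$ satisfy the reflection equation $R_{21} M_1 R M_2 = M_2 R_{21} M_1 R$ in $\Dq(\GL_2)$, and the image of $\detq$ is a unit so that localization at $\detq$ extends to the target. My plan is a structured direct verification.

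\textbf{Step 1: Inverse FRT-type relations.} Starting from the three given matrix identities
$R_{21} X_1 R X_2 = X_2 R_{21} X_1 R$, $R_{21} D_1 R D_2 = D_2 R_{21} D_1 R$, and $R_{21} D_1 R X_2 = X_2 R_{21} D_1 R_{21}\inv$, I would derive the corresponding relations with $X$ or $D$ replaced by $X\inv$ or $D\inv$. For instance, multiplying the first identity on the left by $X_2\inv$ and on the right by $X_1\inv$, and then conjugating by appropriate $R$-matrices, yields the analogous reflection equation for $X\inv$. A parallel manipulation with the cross relation produces the four variants relating $X^{\pm 1}$ and $D^{\pm 1}$ that will be needed in Step~2.

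\textbf{Step 2: Propagation of the $R$-matrix.} Substitute $M_1 = D_1 X_1\inv D_1\inv X_1$ and $M_2 = D_2 X_2\inv D_2\inv X_2$ into $R_{21} M_1 R M_2$, and move the central $R$-matrix rightward through each of the four factors $D_2, X_2\inv, D_2\inv, X_2$ one at a time, using the auxiliary relations from Step~1. The $R$-matrix prefactors accumulated on the right should collapse, using Yang--Baxter and the Hecke identity for $R$, to a single $R$; the $X_i, D_i$ factors on the left should reorganize, using the reflection equations for $X$ and for $D$, into $M_2 R_{21} M_1$. The crucial algebraic input is the interplay between the $R$ appearing in the cross relation for $X_2, D_1$ and the $R_{21}\inv$ on its right-hand side, which is precisely what allows the inverse factor $X_1\inv D_1\inv$ inside $M_1$ to "cancel out" when paired with $D_2 X_2\inv D_2\inv X_2$ on the other side.

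\textbf{Step 3: Invertibility of $\phi_q(\detq)$.} Finally, one verifies that the image of the central element $\detq \in \OqGL^+$ is central in $\Dq(\GL_2)$ and becomes invertible after localization at $\detq(X)\detq(D)$; this then extends $\phi_q$ to the localized algebra $\OqGL$.

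\textbf{Main obstacle.} The bookkeeping in Step~2 is the principal difficulty: with four factors in each of $M_1, M_2$ and one central $R$ to propagate, the direct calculation runs through sixteen separate $R$-matrix moves, and keeping track of the signs, shuffles, and Yang--Baxter substitutions is unwieldy. The cleaner conceptual route, which is the one actually taken in \cite{JordanQuantizedmultiplicativequiver2014, VaragnoloDoubleaffineHecke2010}, is to recognize $\Dq(\GL_2)$ as a Heisenberg double (or the doubled quantum algebra of a surface), in which setting the map $L \mapsto D X\inv D\inv X$ arises as a canonical \emph{group-commutator} quantum moment map and the reflection equation for $M$ is automatic from the general categorical formalism. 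We invoke this rather than reproduce it.
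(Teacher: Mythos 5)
The paper gives no proof of this proposition at all: it is stated purely as a citation of \cite[Definition-Proposition 7.20]{JordanQuantizedmultiplicativequiver2014} and \cite[Proposition 1.8.3]{VaragnoloDoubleaffineHecke2010}, and the surrounding text explicitly says the results of this subsection are stated without proof. Your closing move of invoking those same references is therefore exactly the paper's approach; the direct $R$-matrix verification you sketch in Steps 1--2 is extra (and, as you acknowledge, not carried through), but it is not required to match the paper.
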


In other words, the map $\phi_q$ sends $a \in \OqGL$ to the element of $\Dq(\GL_2)$ appearing in the top left entry of the product $DX\inv D \inv X$, and so on for the other generators. 

Now assume $q$ is a primitive $\ell$-th root of unity, for $\ell >1$ odd. By Corollary \ref{cor:GL2ell}, the coordinate algebra $\O(\GL_2 \times \GL_2)$ is a central subalgebra of $\Dq(\GL_2)$, and we can regard the latter as defining a coherent sheaf of algebras over $\GL_2 \times \GL_2$. Let $(\GL_2 \times \GL_2)^\text{\rm Az} \subseteq \GL_2 \times\GL_2 $ denote the Azumaya locus of $\Dq(\GL_2)$. 

\begin{definition} Let   $L^{(\ell)} $ be the following matrix with values in $\O_q^{(\ell)}(\GL_2)$:
	$$L^{(\ell)} = \begin{bmatrix}	z & b^\ell \\ c^\ell & d^\ell	\end{bmatrix}.$$
	Similarly, we define matrices $X^{(\ell)}$ and $D^{(\ell)}$ with entries in $\Dq^{(\ell)}(\GL_2)$:
		$$X^{(\ell)} = \begin{bmatrix}	v & x_{12}^\ell \\ x_{21}^\ell & x_{22}^\ell	\end{bmatrix}, \qquad  \qquad D^{(\ell)} = \begin{bmatrix}	w & \partial_{12}^\ell \\ \partial_{21}^\ell & \partial_{22}^\ell \end{bmatrix}.$$
\end{definition}

Thus, the matrix $L^{(\ell)}$ encodes the generators of $\O_q^{(\ell)}(\GL_2)$, while $X^{(\ell)}$ and $D^{(\ell)}$ encode the generators of $\Dq^{(\ell)}(\GL_2).$ 

\begin{prop}\label{prop:dqgl2mmcenter} (\cite[Theorem 4.5]{GanevquantumFrobeniuscharacter2019}) Suppose $q$ is a primitive $\ell$-root of unity, for $\ell >1$ odd. 
\begin{enumerate}
\item Then $\phi_q$ restricts to an algebra homomorphism $\phi_q^{(\ell)} : \O_q^{(\ell)}(\GL_2) \rightarrow \Dq^{(\ell)}(\GL_2)$ given by $$L^{(\ell)} \mapsto D^{(\ell)} (X^{(\ell)})\inv (D^{(\ell)})\inv X^{(\ell)}$$ 

\item  Under the identification of $\OqGLell$ with $\O(\GL_2)$ and $\D_q^{(\ell)}(\GL_2)$ with $\O(\GL_2 \times \GL_2)$, the map $\phi_q^{(\ell)}$ matches with $\tilde \phi^\#$. Thus, the following diagram commutes: \[ \xymatrix{  \O_q(\GL_2 ) \ar[rrr]^{\phi_q} & & &  \D_q(\GL_2) \\
		\O(\GL_2 ) \ar@{^{(}->}[u] \ar[rrr]^{\tilde \phi^\#  \ \ } & &  & \O( \GL_2 \times \GL_2) \ar@{^{(}->}[u] }. \]

\item The Azumaya locus of $\Dq(\GL_2)$ over $\GL_2 \times \GL_2$ is the inverse image of the big Bruhat cell: 
$$(\GL_2 \times \GL_2)^\text{\rm Az}  = \tilde\phi\inv(BB^-).$$
\end{enumerate}
\end{prop}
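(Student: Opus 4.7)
I plan to tackle the three parts in order, with the main computational work in part (1) and the main conceptual difficulty in part (3).

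\emph{Part (1).} Write $\phi_q(a) = \alpha$, $\phi_q(b) = \beta$, $\phi_q(c) = \gamma$, $\phi_q(d) = \delta$, so that the matrix with entries $\alpha, \beta, \gamma, \delta$ is $DX\inv D\inv X$. The goal is to compute $\beta^\ell$, $\gamma^\ell$, $\delta^\ell$ explicitly and verify that each lies in $\Dq^{(\ell)}(\GL_2)$ and equals the corresponding entry of the matrix $D^{(\ell)}(X^{(\ell)})\inv(D^{(\ell)})\inv X^{(\ell)}$. The key tool is a ``quantum freshman's dream'': whenever two generators $x,y$ satisfy $xy = q^{2k} yx$, one has $(x+y)^\ell = x^\ell + y^\ell$ because the intermediate $q$-binomial coefficients vanish at a primitive $\ell$-th root of unity. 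Using Lemmas \ref{lem:dqgl2det} and \ref{lem:dqgl2rels}, one tracks the $q$-commutation relations among the various monomials appearing in $\beta$, $\gamma$, $\delta$, and expands the $\ell$-th power in stages. For the image of $z$, I would exploit the defining identity $\detq^\ell = zd^\ell - b^\ell c^\ell$ from Proposition \ref{prop:OqCenter}: first compute $\phi_q(\detq^\ell)$ using that $\detq(X)$ and $\detq(D)$ mutually $q$-commute (Lemma \ref{lem:dqgl2det}), and then solve for $\phi_q(z)$ in terms of the already-computed $\phi_q(d^\ell)$, $\phi_q(b^\ell)$, $\phi_q(c^\ell)$.

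\emph{Part (2).} Given part (1), this is essentially formal. Under the identifications $\OqGLell \cong \O(\GL_2)$ and $\Dq^{(\ell)}(\GL_2) \cong \O(\GL_2 \times \GL_2)$ furnished by Corollary \ref{cor:GL2ell}, the quantum matrix assignment $L^{(\ell)} \mapsto D^{(\ell)}(X^{(\ell)})\inv(D^{(\ell)})\inv X^{(\ell)}$ becomes the literal commutative matrix assignment $L \mapsto DX\inv D\inv X$, which is $\tilde\phi^\#$ by definition.

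\emph{Part (3).} I would follow the strategy of Theorem \ref{thm:dq2mmcenter}(3). For the forward inclusion $(\GL_2 \times \GL_2)^{\text{Az}} \subseteq \tilde\phi\inv(BB^-)$, at any Azumaya closed point $(A,B)$ the fiber of $\Dq(\GL_2)$ is a matrix algebra with no nonzero proper two-sided ideals. Arguing as in Theorem \ref{thm:DqNAzumaya}, I would identify a suitable power of $\phi_q(d)$ (or a related element built from the moment-map image) whose $q$-commutation with the generators forces its image in the fiber to be invertible; then $\tilde\phi^\#(d^\ell)(A,B) = d(AB\inv A\inv B)^\ell \neq 0$, placing $(A,B)$ in $\tilde\phi\inv(BB^-)$. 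For the reverse inclusion, the plan is to invert $\detq(X)$, $\detq(D)$, and $\phi_q(d)$ in $\Dq(\GL_2)$ and to exhibit an isomorphism (possibly after a further Ore localization, analogous to Proposition \ref{prop:tensor}) between the result and a tensor product combining $\Dq(\C^2)^\circ$ with a quantum torus accounting for the remaining ``group-like'' directions in $\GL_2$. Each factor is Azumaya over its center with total rank $\ell^4$, and since tensor products of Azumaya algebras are Azumaya, this would establish Azumaya-ness on $\tilde\phi\inv(BB^-)$.

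The main obstacle is the reverse inclusion of part (3): producing the explicit tensor decomposition -- equivalently, the family of $\ell^4$-dimensional irreducible representations of $\Dq(\GL_2)$ parametrized by $\tilde\phi\inv(BB^-)$ -- is the substantive content of the result and is precisely the type of calculation that motivates the general ``Frobenius quantum moment map'' framework of \cite{GanevquantumFrobeniuscharacter2019}. The part (1) computations, while lengthy, are routine applications of the $q$-commutation identities in Lemmas \ref{lem:dqgl2det} and \ref{lem:dqgl2rels}.
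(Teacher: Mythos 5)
Note first that the paper itself offers no proof of this proposition: it is stated explicitly as a reformulation of results of Varagnolo--Vasserot \cite{VaragnoloDoubleaffineHecke2010} and of \cite{GanevquantumFrobeniuscharacter2019}, introduced by the phrase ``we state without proof.'' So there is no internal argument to compare yours against, and your proposal has to stand on its own; as written it has two substantive gaps. In part (1), your ``quantum freshman's dream'' plan is not routine in the way you claim. Unlike $\mu_q$ in Theorem \ref{thm:dq2mmcenter}, where $b,c,d$ map to single monomials in $q$-commuting generators, each entry of $DX\inv D\inv X$ is a sum of several monomials (involving $\detq(X)\inv$ and $\detq(D)\inv$) which do not pairwise $q$-commute by a primitive $\ell$-th root of unity, so $(x+y)^\ell=x^\ell+y^\ell$ cannot be applied term by term, and ``expanding in stages'' is not a mechanism. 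More seriously, the target entries of $D^{(\ell)}(X^{(\ell)})\inv(D^{(\ell)})\inv X^{(\ell)}$ involve the correction elements $v$ and $w$ of Proposition \ref{prop:OqCenter}, which are \emph{not} $\ell$-th powers of $x_{11}$ and $\partial_{11}$ (those powers are not even central, cf.\ the remark after Corollary \ref{cor:GL2ell}); a direct power expansion would produce $x_{11}^\ell$- and $\partial_{11}^\ell$-type terms, and you give no way of trading them for $v,w$. Your trick of solving for $\phi_q(z)$ from $\detq^\ell=zd^\ell-b^\ell c^\ell$ also needs $\phi_q(d)^\ell$ to be invertible, or at least a regular element, in a suitable localization, which you do not address. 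The actual proof in \cite{GanevquantumFrobeniuscharacter2019} goes through equivariance of the quantum moment map and the quantum Frobenius, not brute-force expansion.

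In part (3), the forward inclusion you import from Theorem \ref{thm:DqNAzumaya} works there only because $\beta_i$ genuinely $q$-commutes with every generator, so its image generates a two-sided ideal in each fiber. The element $\phi_q(d)$ (or $\phi_q(d)^\ell$) enjoys no such simple $q$-commutation with the eight generators of $\Dq(\GL_2)$; the correct substitute is the moment-map (adjoint-equivariance) property of $\phi_q$, which is what controls the two-sided ideal it generates, and your sketch does not supply this. For the reverse inclusion you concede the point yourself: the tensor/quantum-torus decomposition of $\Dq(\GL_2)$ after localizing at $\phi_q(d)$, equivalently the family of $\ell^4$-dimensional irreducible representations over $\tilde\phi\inv(BB^-)$, is never constructed, and this is exactly the content of \cite[Proposition 2.2.3]{VaragnoloDoubleaffineHecke2010} and \cite[Theorem 4.5]{GanevquantumFrobeniuscharacter2019}. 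So what you have is a plausible strategy outline whose two crucial steps -- the matching with $v,w$ in part (1) and Azumaya-ness over $\tilde\phi\inv(BB^-)$ in part (3) -- remain unproved.
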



\bibliography{Quiver-compute}

\medskip

\noindent \rule{7cm}{0.4pt}

\medskip

\noindent{\small

\noindent  {\sc E-mail:} \href{mailto:cooney.maths@gmail.com}{\tt cooney.maths@gmail.com}

\medskip

\noindent {\sc IST Austria, Klosterneuburg, Austria}.   {\sc E-mail:} \href{mailto:iordan.ganev@ist.ac.at}{\tt iordan.ganev@ist.ac.at}

\medskip

\noindent {\sc School of Mathematics, University of Edinburgh, Edinburgh, UK}.    {\sc E-mail:} \href{mailto:D.Jordan@ed.ac.uk}{\tt D.Jordan@ed.ac.uk}}

\vfill

\end{document}